\documentclass[12pt, reqno]{amsart}
\usepackage{amsfonts}
\usepackage{bbm}
\usepackage{amscd,amsfonts}
\usepackage{amssymb, eucal, amsfonts, amsmath, xypic, latexsym}
\usepackage{pifont}
\usepackage{color}
\usepackage{mathrsfs}
\usepackage{amsthm,indentfirst,bm,fancyhdr,dsfont}
\usepackage{graphicx}
\usepackage[all]{xy}
\usepackage[CJKbookmarks=true]{hyperref}

\usepackage{mathrsfs,color}

\newtheorem{thm}{Theorem}[section]
\newtheorem{lem}[thm]{Lemma}
\newtheorem{cor}[thm]{Corollary}
\newtheorem{prop}[thm]{Proposition}

\newtheorem{super-derivations}[thm]{Super-Derivation}

 \theoremstyle{definition}
\newtheorem{example}[thm]{Example}

\newtheorem{defn}[thm]{Definition}

\newtheorem{Kac claim}[thm]{Kac claim}

\numberwithin{equation}{thm}

\newcommand{\de}{\delta}
\def\N{\mathscr N}
\def\ggg{\mathfrak{g}}

\def\ppp{\mathfrak{p}}
\def\nnn{\mathfrak{n}}
\def\mmm{\mathfrak{m}}
\def\zzz{\mathfrak{z}}
\def\hhh{\mathfrak{h}}
\def\uuu{\mathfrak{u}}
\def\ker{{\rm Ker\,}}

\def\ad{{\rm ad}}
\def\ch{{\rm ch}}

\def\Hom{{\rm Hom}}

\def\pr{\partial}

\def\bbf{\mathbb F}
\def\bbz{\mathbb Z}
\def\bbc{\mathbb C}

\def\co{\mathcal O}
\def\calr{\mathcal{R}}
\def\calo{\mathcal{O}}

\newcommand{\mf}{\mathfrak}

\newcommand{\al}{\alpha}

\newcommand{\ot}{\otimes}

\newcommand{\dis}{\displaystyle}
\newcommand{\ga}{\gamma}
\newcommand{\mbf}{\mathbf}

\def\N{\mathbb{N}}

\def\bbc{\mathbb{C}}
\def\Z{\mathbb{Z}}
\def\bbz{\mathbb{Z}}

\newcommand{\op}{\oplus}
\newcommand{\cd}{\cdot}
\newcommand{\bgop}{\bigoplus}
\newcommand{\mscr}{\mathscr}
\newcommand{\mcal}{\mathcal}

\newcommand{\ov}{\overline}
\newcommand{\la}{\lambda}

\def\Res{\text{Res}}

\def\sfp{\textsf{P}}
\def\sfn{\textsf{N}}

	\def\lc{\mathcal{L\kern -.3em   C}}

	\def\bz{{\bar{0}}}
	\def\bo{{\bar{1}}}
	\def\hmod{\text{-\bf{mod}}}

	\def\mscrw{\mathscr{W}}
	\def\bfB{\mathbf{B}}

	\def\bfg{\mathbf{g}}
	
	\def\bfn{\mathbf{n}}
	
	\def\Spec{\textsf{Spec}}
	
	\def\hmod{\text{-}\mathbf{mod}}

\begin{document}
		
		\title[Representations of Lie superalgebras]{Whittaker Category and finite $W$-superalgebras for Cartan type Lie superalgebras}
		
		\author{Priyanshu Chakraborty, Yuhui shen and Bin Shu}
		

		\subjclass[2010]{17B10, 17B66, 17B70}
		
		
		\begin{abstract} Let $W(n)$ be the finite-dimensional simple Lie superalgebra of fundamental type in the Cartan type series of Kac's  classification result \cite{Kac77} over an algebraically closed field of characteristic $0$. Let $\mathbf{g}$ be the graded-zero part of $W(n)$ which is isomorphic to $\mathfrak{gl}(n)$.
			In the first part of this paper,  following the basic idea of taking the ``minimal" parabolic subalgebra $\mathsf{P}$ as a working platform in \cite{DSY} we introduce the Whittaker category $\mscrw$ for representations of $W(n)$ associated with a nilpotent element $e$ in $\mathbf{g}_0$ and with $W(n)_{-1}$. This Whittaker category  turns out to be close to the classical Whittaker category  McDowell and Mili\v{c}i\'{c}-Soergel studied in \cite{Mc} and  \cite{MS}, respectively (or see \cite{Back}). We finally classify the simple objects in $\mscrw$. In the second part, we introduce the finite $W$-algebra associated with $e$,  we then establish a generalized Skryabin's equivalence between the representation category  of the finite $W$-superalgebra and  the category  $\mscrw'$ of so-called weakened Whittaker modules over $W(n)$. Here $\mscrw'$ naturally contains $\mscrw$ as a full subcategory.
		\end{abstract}
		
		\maketitle

		\section*{Introduction and preliminaries}

		\subsection{} In \cite{Kac77}, Kac classified all finite-dimensional simple Lie superalgebras over $\bbc$. His result shows that all finite-dimensional simple Lie superalgebras over $\bbc$ which are not Lie algebras,  are classified into two classes: classical Lie superalgebras and Cartan type Lie superalgebras.  The study on classical Lie superalgebras over $\bbc$  increasingly draws attention.  Lots of results make it in a great progress.   Comparatively, the study on Cartan type Lie algebras are lack of attention.

		\subsubsection{} In \cite{DSY}, Feifei Duan, Yufeng Yao and the last author of the present paper first obtained an interesting classifying result on parabolic subalgebras of graded Cartan type Lie superalgebras, then they were aware of the significance of ``minimal" parabolic subalgebras in their representation categories. They  initiated the study of parabolic Bernstein-Galfeld-Galfeld category $\mathcal{O}$ for graded Cartan type Lie superalgebras over $\bbc$, associated with the minimal parabolic subalgebras. There the authors classified all blocks of the category $\mathcal{O}$ and precisely described them, and finally obtained character formulas  for indecomposable tilting modules and projective modules.

		\subsubsection{} In the present paper,  following the basic idea of taking ``minimal" parabolic subalgebras as a basic platform  \cite{DSY} we introduce our  Whittaker category $\mscrw$ for the fundamental Cartan type Lie superalgebra $W(n)$. Such a Whittaker category  turns out to be close to the classical Whittaker category  McDowell and Mili\v{c}i\'{c}-Soergel studied in \cite{Mc} and  \cite{MS}, respectively (or see \cite{Back}). In the first part of the present paper (\S\ref{sec: sec one}-\S\ref{sec: sec three}), we  then describe the Whittaker category $\mscrw$,  furthermore, we introduce  a generalized Backelin functor connecting the parabolic BGG category $\calo$ and $\mscrw$.
 However, when we tried to introduce the corresponding finite $W$-superalgebra and establish generalized Skryabin's equivalence, we found that the above aim could be only partially gained.  As a result, in the second part (\S\ref{sec: second part}),  giving up to make use of the minimal parabolic subalgebra $\mathsf{P}$,  we instead consider the so-called weakened Whittaker modules only associated with a nilpotent element $e\in \mathbf{g}$ as introduced in the abstract (see Remark \ref{sec: example} for further explanation). The category of the weakened  Whittaker modules is bigger than $\mscrw$, which is denoted by $\mscrw'$. Then we consider the finite $W$-algebra corresponding to $\mscrw'$. The main results are listed below.
		\begin{itemize}
			\item[(1)] Classify all simple objects for $\mscrw$, up to isomorphisms (see Proposition \ref{Thm: unique irr quo}, Theorem \ref{thm: cat decomp} and Theorem \ref{thm: 3.2}).

			\item[(2)]  The objects of this category generally does not have finite-length composition series unless $n=2$.

			\item[(3)]  Establish a generalized Skryabin's equivalence between $\mscrw'$ and the module category of the finite $W$-algebra corresponding to $\mscrw'$ (see Theorem \ref{thm: skryabin eq}).
		\end{itemize}

		\subsection{}		Throughout the paper, we assume that $n$ is a positive integer and $\calr:= \bigwedge_\bbf(y_1,\ldots,y_n)$ is the polynomial superalgebra with indeterminants $  y_s, s=1,\ldots,n$ over an algebraically closed field $\bbf$. The degree conventions  $\deg y_j= 1$ naturally determine a $\bbz$-grading on $\calr$. Hence $\calr$ is endowed with $\bbz$-graded structure,   this $\bbz$-graded structure gives rise to the super space structure of $\calr$. This is to say,
		$$\calr =\sum_{i=0}^{N} \calr_i$$
		with $\calr_0=\bbf$,  $\calr_i=\text{span}_\bbf\{ y_{k_1}\wedge \cdots \wedge y_{k_i}\mid  1\leq k_1<\cdots<k_i\leq n\}$, $i=1,\ldots, N:=2^{n}-1$, and
		$$\calr=\calr_\bz\oplus \calr_\bo$$ for
		$\calr_\bz=\dis\sum_{i=\text{even}}\calr_i$, $\calr_\bo=\dis\sum_{i=\text{odd}}\calr_i$. In the subsequence, the above polynomial $y_{k_1}\wedge \cdots \wedge y_{k_i}$ in $\calr$ is simply written as $y_{k_1}\cdots y_{k_i}$.

		Denote by $W(n)$ the space of super derivations of $\mcal R$ which is required to be a free $\calr$-module of rank $n$ with basis $D_i={\partial\over{\partial y_i}}$ for $i=1,\ldots,n$.  By convention,  the degree of $D_i$ is equal to $1$.
		
		Naturally,  $W(n)$ is endowed with  a $\bbz$-graded structure with gradation
		\begin{align*}
			W(n)_i=\sum_{k=1}^n\calr_{i+1}D_k.
		\end{align*}
		Moreover, $W(n)$ is also a superspace endowed with a $\Z_2$-grading compatible its $\bbz$-grading. This is to say,
		\begin{align*}
			&W(n)_\bz=\sum_{i=\text{even}} W(n)_i, \cr
			&W(n)_\bo=\sum_{i=\text{odd}} W(n)_i.
		\end{align*}
		By convention, $\deg f$ denotes $i$ if  $f$ lies in $\calr_i$.  Then $W(n)$ becomes a Lie superalgebra over $\bbf$ with Lie bracket  by defining via
		$$[fD_s, gD_t]=fD_s(g)D_t-(-1)^{(\deg f-1)(\deg g-1)}gD_t(f)D_s$$
		for any homogeneous polynomials $f\in \calr_{\deg f}$ and $g\in \calr_{\deg g}$.
		
		\subsubsection{}	From now on, we always simply denote $W(n)$ by $\ggg$.   Then $\ggg=\dis \bigoplus_{i\geq -1}\mf g_i$ and $\ggg_0 \simeq \mf{gl}(n)$ as Lie algebras.
		Recall that $\ggg_0$ admits a canonical Cartan subalgebra $\hhh$ which is  spanned  by $y_iD_i$ for
		$i=1,\ldots,n$. With respect to $\hhh$, $\ggg_0$ admits a root system $\Phi=\{\epsilon_i-\epsilon_j\mid 1\leq i\ne j\leq n\}$ and the corresponding positive root system $\Phi^+=\{ \epsilon_i-\epsilon_j\mid 1\leq i<j\leq n\}$. Let $\Delta$ be the set of all simple roots, i.e. $\Delta=\{\epsilon_i-\epsilon_{i+1}\mid i=1,\ldots,n-1\}$ and $\nnn^+=\sum_{i<j}\bbf y_iD_j$, $ \nnn^-=\sum_{i>j}\bbf y_iD_j$ be the sum of all positive and negative root spaces of $\ggg_0$ respectively.  Then $\ggg_0=\hhh+\sum_{\alpha\in \Phi}\ggg_{0,\alpha}$  with  $\ggg_{0,\alpha}=\bbf y_iD_j$ for $\alpha=\epsilon_i-\epsilon_j\in \Phi$, $(i\ne j)$.
		
		\subsubsection{} Let $W$ be the Weyl group of $\ggg_0$ which is isomorphic to the symmetry group $\mathfrak{S}_n$ of degree $n$. Let $\rho$ be half of the sum of positive roots of $\ggg_0$. For $\sigma\in W$ and $\lambda\in \hhh^*$, denote $\sigma\bullet\lambda=\sigma(\lambda+\rho)-\rho$.

		\section{The Whittaker categories for $\ggg$}\label{sec: sec one}
		Keep the above notations and assumptions. Denote by $U(\ggg)\hmod$ the category of $U(\ggg)$-modules.
		\subsection{Whittaker categories for the Cartan type Lie superalgebra $\ggg=W(n)$}		
		
		\begin{defn}\label{def: whittaker}
			A Whittaker category $\mathscr{Wh}$ is a full subcategory of $U(\ggg)\hmod$,  with objects $M$ satisfying the following axioms:
			
			\begin{itemize}
				
				\item[(W1)]  $M$ is finitely generated over $U(\ggg)$;

				\item[(W2)]  $M$ is locally finite over $\nnn^+$ and locally nilpotent over $\ggg_{-1}$, i.e., for any $v\in M$, $U(\nnn^+)v$ is finite-dimensional, and there exists some positive integer $N_X$ such that  $X^{N_X}v=0$ for any $X\in \ggg_{-1}$;
				
				\item[(W3)]  $M$ is locally finite over $Z(\ggg_0):=\text{Cent}(U(\ggg_0))$, i.e., for any $v\in M$, $Z(\ggg_0)v$ is finite-dimensional.
			\end{itemize}
		\end{defn}
		Obviously,  the category $\co$ defined in \cite{DSY} is a subcategory of $\mathscr{Wh}$. Furthermore, the following fact is very fundamental and important.

		\begin{lem}\label{lem: abelian} The category $\mscrw$ is an abelian category.
		\end{lem}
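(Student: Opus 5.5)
Here $\mscrw$ is the full subcategory $\mathscr{Wh}$ of Definition \ref{def: whittaker}. Since it is full in the abelian category $U(\ggg)\hmod$, it suffices to check three things: it contains $0$, it is closed under finite direct sums, and it is closed under kernels and cokernels of morphisms. It is convenient to prove the stronger statement that $\mscrw$ is closed under submodules, quotients and finite direct sums. Kernels, cokernels, images and coimages are then computed in $U(\ggg)\hmod$, and the remaining axioms of an abelian category (for instance that the canonical map from the coimage to the image is an isomorphism) are inherited from $U(\ggg)\hmod$.

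\textbf{Quotients and direct sums.} Axioms (W2) and (W3) are pointwise conditions on vectors, so they pass directly to quotients and finite direct sums.
\begin{itemize}
\item For $\bar v$ the image of $v$, the space $U(\nnn^+)\bar v$ is the image of $U(\nnn^+)v$. Likewise $X^{N_X}\bar v=0$, and $Z(\ggg_0)\bar v$ is the image of $Z(\ggg_0)v$.
\item For $v=(v_1,v_2)$ in a direct sum, $U(\nnn^+)v\subseteq U(\nnn^+)v_1\oplus U(\nnn^+)v_2$, and the same holds for $Z(\ggg_0)$. One can take $N_X=\max(N_X(v_1),N_X(v_2))$.
\end{itemize}
Finite generation (W1) is clearly preserved by quotients and finite direct sums.

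\textbf{Submodules.} Axioms (W2) and (W3) are again inherited vector by vector, since for $v$ in a submodule the relevant spaces are the same subspaces of the ambient module. The only real issue is (W1): a submodule of a finitely generated $U(\ggg)$-module must again be finitely generated. This is the step I expect to be the main point, and I would handle it via noetherianity. The algebra $U(\ggg)$ has the PBW filtration, whose associated graded algebra is $S(\ggg_\bz)\otimes\bigwedge(\ggg_\bo)$. This is a finitely generated supercommutative algebra and a finite module over the noetherian polynomial ring $S(\ggg_\bz)$, hence left noetherian. A standard filtered-graded argument then shows that $U(\ggg)$ is left noetherian. Alternatively, one can observe directly that $U(\ggg)$ is a finitely generated module over the noetherian ring $U(\ggg_\bz)$, since $\ggg$ is finite-dimensional. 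Consequently every submodule of a finitely generated $U(\ggg)$-module is finitely generated, which gives (W1).

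Combining the two steps, the kernel and cokernel of any morphism in $\mscrw$ lie in $\mscrw$, and hence $\mscrw$ is an abelian category.
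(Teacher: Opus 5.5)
Your proof is correct and follows the paper's approach: the paper's entire proof is the remark that the lemma follows by a routine check of the definition, and your closure argument under submodules, quotients and finite direct sums is that check. You also make explicit the one step that is not a pointwise verification and that the paper leaves unsaid: kernels stay finitely generated because $U(\ggg)$ is left noetherian, for instance as a finitely generated module over $U(\ggg_\bz)$.
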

		
		\begin{proof} By a routine check according to the definition, it is easily proved.
		\end{proof}
		
		
		\begin{example}\label{sec: example}
			Let $\eta:\nnn^+\rightarrow \bbf$ be a character of $\nnn^+$, which can be regarded as a linear function on $\nnn^+\slash [\nnn^+,\nnn^+]$ (associated with the nondegenerate trace form $(X,Y):=\mathsf{tr}(XY)$ on $\ggg_0\cong \mathfrak{gl}(n)$,  $\eta$ uniquely corresponds to a nilpotent element $e\in \ggg_0$ such that $\eta=(e,\cdot)$). So $\eta$ determines a subset $I$ of $\Delta$, consisting of $\alpha\in \Delta$ with  $\eta(\ggg_{0,\alpha})\ne0$, here $\ggg_{0,\alpha}$ denotes the root space of $\mf g_0$ corresponding to the root $\al$. Let $\ggg^\eta_0$ be the standard Levi subalgebra of $\ggg_0$ corresponding to the subsystem I of $\Delta$. Then $\ggg^\eta_0$ admits   $\nnn^+_\eta$ the sum of all positive root spaces associated with its positive root system $\Phi(\ggg_0^\eta)^+\subset \Phi^+$.  Denote by $\ppp_0^\eta$ the corresponding parabolic subalgebra of $\ggg_0$, this means,
			$$\ppp_0^\eta=\ggg_0^\eta\oplus\uuu^+$$
			with $\uuu^+$ denoting the nilpotent radical of $\ppp_0^\eta$ which is the sum of all root spaces $\ggg_{0,\gamma}$ corresponding to $\gamma\in\Phi^+\backslash \bbz I$. Consider the center of $U(\ggg_0^\eta)$ which is  denoted by $Z(\ggg_0^\eta)$.
			Let $\xi_\eta^{\#}: Z(\ggg^\eta_0) \to S(\mf h) $ be the Harish-Chandra homomorphism of $U(\ggg^\eta_0)$ satisfying $\xi_\eta^{\#}(z)-z \in U(\ggg^\eta_0)\mf n_\eta^+$. It induces a map from $\hhh^*$ to the spectrum of the maximal ideals of $Z(\ggg_0^\eta)$, which is denoted by $\xi_\eta: \mf h^* \to \Spec(Z(\ggg_0^\eta))$.

			For the 1-dimensional representation $\bbf_{\eta}$ of $ \nnn^+_\eta$, we introduce a $U(\ggg_0^\eta)$-module defined by
			$$ Y(\eta,\la)={U(\ggg_0^\eta) }/{\xi_\eta(\la-\rho_0)U(\ggg^\eta_0)}  \otimes_{U(\mf n_\eta^+)} \bbf_{\eta},$$
			where $\rho_0$ is half sum of positive roots of $\ggg_0\cong\mf{gl}(n)$, $\la\in \hhh^*$. Notice that the first projection
			$\mf p_0^\eta= \ggg_0^\eta\oplus\uuu^+ \twoheadrightarrow \ggg_0^\eta$ defines a $U(\mf p_\eta)$-module structure on $Y(\eta,\la)$.
			It is known from \cite{Mc,MS} that $M^0(\eta,\la)=U(\ggg_0) \otimes_{U(\ppp_0^\eta)} Y(\eta,\la)$  is a Whittaker module for $\ggg_0$, which admits a unique irreducible quotient $L^0(\eta,\la)$.

			Let $\sfp:= \mf g_{-1} \op \mf g_0 $, $\sfn:= \mf g_{-1} \op \mf n^+\subset \sfp$. Then both $\sfp$ and $\sfn$ are Lie sub-superalgebra of $\ggg$, respectively.  By extending $\eta$ to a function on $\sfn$ with letting $\eta(\ggg_{-1})=0$,
			we get a character  $\eta: \sfn \to \bbf$.
			Now we set
			$$ \mcal M(\eta,\la)= U(\mf g) \ot_{U(\sfp)} L^0(\eta,\la) ,$$
			where $L^0(\eta,\la)$ extends to a $\sfp$-module by letting $\mf g_{-1}$ trivially act.
		\end{example}
		
		\subsection{} Now we further investigate $\mcal{M}(\eta,\la)$.  Denote by $\text{ch}(\nnn^+)$ the set of characters on $\nnn^+$.
 Note that associated with  $\eta\in \text{ch}(\nnn^+)$, we have a Levi subalgebra $\ggg_0^\eta$ as in Example \ref{sec: example}, and then the corresponding Weyl group which is denoted by $\mcal W_\eta$.
		\begin{prop}\label{Thm: unique irr quo} The following statements hold.
			
			\begin{itemize}
				\item[(1)] For $\eta\in \text{ch}(\nnn^+)$ and $\la\in \hhh^*$,  $\mcal M(\eta,\la)$ is an object in $\mscr W$.	
				\item[(2)] $\mcal M(\eta,\la)$ has a unique irreducible  quotient, denoted by $\mcal L(\eta,\la)$.
				\item[(3)] The following are equivalent.
				\begin{itemize}
					\item[(a)]				$\mcal M(\eta,\la) \simeq \mcal M(\eta,\mu)$;
					\item[(b)]  $\mcal L(\eta,\la) \simeq \mcal L(\eta,\mu)$;
					\item[(c)]  $\mcal W_\eta \bullet \la = \mcal W_\eta \bullet\mu.$
				\end{itemize}
			\end{itemize}
			
		\end{prop}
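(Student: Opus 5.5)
We use the $\bbz$-grading of $\ggg$ together with the PBW theorem. Put $\ggg_{\geq 1}:=\bigoplus_{i\geq1}\ggg_i$. This is a nilpotent Lie subsuperalgebra, and it is finite dimensional with odd part of finite dimension. We have $\ggg=\ggg_{\geq1}\oplus\sfp$, so PBW gives $U(\ggg)\cong U(\ggg_{\geq1})\otimes U(\sfp)$. Hence
$$\mcal M(\eta,\la)\cong U(\ggg_{\geq1})\otimes_\bbf L^0(\eta,\la)$$
as $\ggg_{\geq1}$-modules and as $\ggg_0$-modules, where $\ggg_0$ acts by the adjoint action on the first factor. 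Note that $U(\ggg_{\geq1})$ is finite dimensional, because $\ggg_{\geq1}$ is nilpotent with each $\ggg_i$ of finite dimension and even elements of positive degree being locally nilpotent; more directly, $\ggg$ itself is finite dimensional. Let $d$ be the $\bbz$-grading on $\mcal M(\eta,\la)$ induced by placing $L^0(\eta,\la)$ in degree $0$. Then $\mcal M(\eta,\la)=\bigoplus_{k\geq0}\mcal M_k$, with $\mcal M_0=L^0(\eta,\la)$, each $\mcal M_k$ a $\ggg_0$-module, and $\mcal M_k\cong S_k\otimes L^0$ for $S_k$ a finite-dimensional $\ggg_0$-module ($\ggg$ acts with degree shifts, but $\ggg_{-1}$ lowers degree).

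For (1), (W1) holds since the module is generated by $L^0$, which is cyclic. For (W2), $\ggg_{-1}$ lowers $d$, and the grading is bounded below, so $\ggg_{-1}$ acts locally nilpotently. Local finiteness over $\nnn^+$ and (W3) follow because each $\mcal M_k\cong S_k\otimes L^0(\eta,\la)$, with $S_k$ finite-dimensional. Tensoring a Whittaker $\ggg_0$-module (locally $\nnn^+$-finite and $Z(\ggg_0)$-finite, as in \cite{MS}) with a finite-dimensional module preserves these properties; this is Kostant's theorem on $Z$-finiteness of $E\otimes M$.

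For (2), we show that every proper submodule $N$ satisfies $N\cap\mcal M_0=0$. Any submodule is $\hhh'$-graded, where $\hhh'=\bbf\sum_i y_iD_i$ is the grading element of $\ggg_0$, which is central in $\ggg_0$ and acts on $\mcal M_k$ by the scalar $c+k$ with $c$ the scalar on $L^0$. So $N=\bigoplus_k(N\cap\mcal M_k)$. If $N\cap\mcal M_0\neq0$, then simplicity of $L^0(\eta,\la)$ forces $\mcal M_0\subset N$, hence $N=\mcal M$. Therefore the sum of all proper submodules is proper, and the quotient by it is $\mcal L(\eta,\la)$, which is the unique irreducible quotient. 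This argument needs the grading-element fact: $\sum y_iD_i$ acts by $k$ on $\ggg_k$, so ad-eigenvalues separate the degrees.

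For (3), (c)$\Rightarrow$(a) follows from \cite{Mc,MS}: $\mcal W_\eta\bullet\la=\mcal W_\eta\bullet\mu$ gives $\xi_\eta(\la-\rho_0)=\xi_\eta(\mu-\rho_0)$, so $Y(\eta,\la)=Y(\eta,\mu)$ and $L^0(\eta,\la)\cong L^0(\eta,\mu)$; the correct $\rho$-shift convention needs checking here. (a)$\Rightarrow$(b) holds by uniqueness of the irreducible quotient. For (b)$\Rightarrow$(c), we recover $L^0$ from $\mcal L$: the lowest $\hhh'$-eigenspace of $\mcal L(\eta,\la)$ is the image of $\mcal M_0$, which is isomorphic to $L^0(\eta,\la)$ as a $\ggg_0$-module. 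Equivalently it is the space $\mcal L^{\ggg_{-1}}$ of $\ggg_{-1}$-invariants restricted to the lowest degree. An isomorphism $\mcal L(\eta,\la)\cong\mcal L(\eta,\mu)$ is compatible with $\hhh'$-eigenspaces, so $L^0(\eta,\la)\cong L^0(\eta,\mu)$. The classical result of McDowell and Mili\v{c}i\'{c}–Soergel then gives $\mcal W_\eta\bullet\la=\mcal W_\eta\bullet\mu$. The main obstacle is this last step, namely handling the possibility that the lowest eigenvalues differ. They must coincide, because $\hhh'$ is central in $\ggg_0$ and acts on $L^0(\eta,\la)$ by a scalar determined by $\la$. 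We compare the minimal eigenvalues occurring in the two modules, since a $\ggg$-isomorphism preserves the set of eigenvalues.
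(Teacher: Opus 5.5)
Your proposal is correct and follows the paper's route. You use the PBW identification $\mcal M(\eta,\la)\cong U(\ggg_{\geq1})\otimes L^0(\eta,\la)$, the eigenspace grading for $I_n=\sum_i y_iD_i$ with degree-zero piece $L^0(\eta,\la)$ (which gives the unique maximal submodule and recovers $L^0(\eta,\la)$ from $\mcal L(\eta,\la)$), and \cite{MS} to compare the $L^0$'s. Your Kostant tensor-product argument for (W3) and your lowest-eigenvalue comparison in (b)$\Rightarrow$(c) are in fact more careful than the paper's versions.

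Two corrections. First, your aside that $U(\ggg_{\geq1})$ is finite dimensional is false for $n\geq3$: the even space $\ggg_2=\sum_k\calr_3D_k$ is nonzero, and the paper exploits $U(\ggg_{\geq1})_i\neq0$ for all $i$ in Proposition~\ref{lem: w2fintelength}. Nothing breaks, because you only ever use $\dim U(\ggg_{\geq1})_k<\infty$. Second, your $\rho$-shift worry is justified. With the paper's unshifted Harish-Chandra map, $\xi_\eta(\la-\rho_0)=\xi_\eta(\mu-\rho_0)$ holds exactly when $\mcal W_\eta\la=\mcal W_\eta\mu$, with the linear action rather than the dot action. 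So (c) must be read in that normalization for (c)$\Rightarrow$(a) to hold, a point the paper's proof, which simply cites \cite{MS}, also passes over.
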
	
		\begin{proof}
			For the part (1), first note that $\mcal M(\eta,\la)$ is finitely generated $U(\mf g)$-module since it is induced from   $L^0(\eta,\la)$, and the latter is irreducible $\mf g_0$-module.  So $\mcal M(\eta,\la)$ satisfies  the axiom (W1) in Definition \ref{def: whittaker}.   By the definition of $\mcal M(\eta,\la)$,   the trivial action of $\mf g_{-1}$ on $L^0(\eta,\la)$ and the locally finite action of $\nnn$ on $L^0(\eta,\la)$ implies (W2). Now to verify (W3), we observe  the following fact: for any $\zzz\in Z(\ggg_0)$
			$$ \zzz\cd (\dis{\sum_{i} X_i \ot w_i}) =\dis{\sum_{i} \zzz\cd X_i \ot w_i}+  \dis{\sum_{i} X_i \ot \zzz\cd w_i}, \,$$
			for all $ \, X_i \in U(\mf g) _i, \, w_i \in L^0(\eta,\chi)$.
			Note that in the above sum $Z(\mf g_0).w_i$ is finite-dimensional, since $L^0(\eta,\la)$ is a module in the Whittaker category of $\mf{gl}(n)$. Also each $Z(\mf g_0).X_i$ is an finite-dimensional since $Z(\mf g_0).X_i \in U(\mf g)_i$ and $\dim (U(\mf g)_i) < \infty$. We have that $ Z(\ggg_0)\cd (\dis{\sum_{i} X_i \ot w_i})$ is finite-dimensional.  The axiom (W3) is confirmed.  This completes the proof of the part (1).
			
			For the part (2), we first decompose $\mcal M(\eta,\la)$ into the sum of eigen spaces with repect to $I_n:=\sum_{i=1}^n y_iD_i\in\hhh$. Then
			\begin{align}\label{eq: m chi eta}
				\mcal M(\eta,\la)= \dis{\bgop_{m \in \N} } \mcal M(\eta,\la)_m
			\end{align}
			where $\mcal M(\eta,\la)_m = U(\mf g_{\geq 1})_m \ot L^0(\eta,\la)$.
			Therefore it is clear that  $\mcal M(\eta,\la)_m $ is a $\mf g_0$-module for all $m \in \N$. Let $W$ be any non-zero proper submodule of $ \mcal M(\eta,\la)$. It is routine to prove that any submodules of $\mcal M(\eta,\la)$ is still a weight module with respect to the toral  Lie subalgebra $\bbf I_n$. Thus $W$ has an $I_n$-weight space decomposition:  $W= \dis{\bgop_{m \in \N} } W_m, $ where $W_m = \mcal M(\eta,\la)_m \cap W$.  Now we claim that $W_0=0$. Otherwise $W_0$ is a submodule of $L^0(\eta,\la)$ which is irreducible. This contradicts the assumption that $W$ is proper. Therefore the sum of all proper submodules is still proper and hence it has a unique maximal submodule.  Consequently, $\mcal L(\eta,\la)$ has  a unique  irreducible quotient.

			For the  part  (3),  we first consider (a)$\Rightarrow$(b).  Suppose $\mcal M(\eta,\la) \simeq \mcal M(\eta,\mu)$,  then their weight spaces of  $\bbf I_n$-zero weight in the sense of (\ref{eq: m chi eta}) must be isomorphic,  as $\ggg_0$-modules. This is to say,  $\mcal M(\eta,\la)_0 \simeq \mcal M(\eta,\mu)_0$ as $ \mf g_0$-modules. Hence $L^0(\eta,\la) \simeq L^0(\eta,\mu)$ as $\mf g_0$-modules.  Consequently,  by (1) we have  $\mcal L(\eta,\la) \simeq \mcal L(\eta,\mu)$.
			
			For (b)$\Rightarrow$(c),   suppose we already have that $\mcal L(\eta,\la) \simeq \mcal L(\eta,\mu)$. By the same arguments as above,  their weight spaces of  $\bbf I_n$-zero weight in the sense of (\ref{eq: m chi eta}) must be isomorphic. Hence  $L^0(\eta,\la) \simeq L^0(\eta,\mu)$ as $\mf g_0$-modules.
			Hence it follows from \cite[Prop. 2.1]{MS} that $\mcal W_\eta \bullet \la = \mcal W_\eta \bullet \mu$.

			For (c)$\Rightarrow$(a), by applying \cite[Prop. 2.1]{MS} again we have that  $\mcal W_\eta \bullet \la = \mcal W_\eta \bullet\mu$ implies $L^0(\eta,\la) \simeq L^0(\eta,\mu)$ as $\mf g_0$-modules.  Therefore,  by definition it is  directly deduced that   $\mcal M(\eta,\la) \simeq \mcal M(\eta,\mu)$.
			%
			
			The proof is completed. 
		\end{proof}

		\section{Decomposition of the Whittaker category and classification of  simple objects}
		Keep the notations as before.
		\subsection{Decomposition of the Whittaker category}		
		Assume that $\eta$ is a character of $\nnn^+$ and extends to the Lie superalgebra homomorphism
		\begin{align}\label{eq: eta}
			\eta: \sfn=\nnn^+\oplus\ggg_{-1}\longrightarrow \bbf
		\end{align}
		with sending $\ggg_{-1}$ to zero.
		Let us denote by $\mscr W_\eta$ the subcategory of $\mscr W$, whose objects are locally annihilated by some powers of $\ker \eta$ in (\ref{eq: eta}). Then we have the following theorem.	
		
		\begin{thm}\label{thm: cat decomp}
			\begin{enumerate}
				
				\item The category $\mscr W$ splits into a direct sum of subcategories according to the characters over $\nnn^+$ as below
				$$\mscr W= \dis{\bigoplus_{\eta\in \text{ch}(\nnn^+)}}\mscr W_\eta$$
				where $\text{ch}(\nnn^+)$ denotes the set of characters of $\nnn^+$ which extend to $N^*$ by taking zero at $\ggg_{-1}$.

				\item Furthermore, $\mscr W_\eta$ can be decomposed into a direct sum of subcategories $\mathscr{W}_{\eta,\chi}$ with $\chi$ ranging over the central characters of $Z(\ggg_0)$.
			\end{enumerate}			
		\end{thm}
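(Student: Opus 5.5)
The plan is to imitate the classical decomposition of the Whittaker category for $\mathfrak{gl}(n)$ (McDowell, Mili\v{c}i\'{c}--Soergel). The extra odd directions $\ggg_{-1}$ are handled by the local nilpotence in axiom (W2).

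For part (1), fix an object $M$ of $\mscrw$ and a vector $v\in M$. By (W2), $V:=U(\nnn^+)v$ is finite-dimensional and $\nnn^+$-stable. Since $\nnn^+$ is nilpotent, Lie's theorem (or Engel's, applied to $x-\eta(x)$) gives a generalized weight space decomposition
$$V=\bigoplus_{\eta}V^\eta,\qquad V^\eta=\{w\in V\mid (x-\eta(x))^{N}w=0 \ \text{for all } x\in\nnn^+,\ N\gg0\},$$
where $\eta$ runs over finitely many characters of $\nnn^+$. Every such $\eta$ vanishes on $[\nnn^+,\nnn^+]$, so it is an element of $\text{ch}(\nnn^+)$. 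Set $M^\eta:=\sum_{v}(U(\nnn^+)v)^\eta$. Then $M=\bigoplus_\eta M^\eta$, and the sum is direct because generalized eigenspaces for distinct characters are independent.

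The key step, and the one I expect to be the main obstacle, is to show that each $M^\eta$ is a $\ggg$-submodule lying in $\mscrw_\eta$. Here ``$\ker\eta$ acts locally nilpotently'' must include $\ggg_{-1}$. That part is given directly by (W2): each $X\in\ggg_{-1}$ is odd and acts nilpotently, and $\ggg_{-1}$ is supercommutative with $[\ggg_{-1},\ggg_{-1}]=0$, so $U(\ggg_{-1})$ is finite-dimensional and acts nilpotently. One then checks that $\sfn$ acts so that $(\ker\eta)$ is locally nilpotent on $M^\eta$.

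For $\ggg$-stability I would use the standard argument. Given $y\in\ggg$ and $w\in M^\eta$, the operator $\ad(x)-\eta(x)+\eta(x)$ acts on $U(\ggg)\otimes M$, and the map $U(\ggg)\otimes M\to M$ is $\nnn^+$-equivariant. Since $\ad(\nnn^+)$ is nilpotent on $\ggg$ (the adjoint action of $\nnn^+$ on each $\ggg_i$ is nilpotent), $\ggg\otimes M^\eta$ lies in the generalized $\eta$-eigenspace, because $0+\eta=\eta$. Hence $yw\in M^\eta$. This uses the identity $(x-\eta(x))^N(yw)=\sum\binom{N}{k}(\ad x)^k(y)\,(x-\eta(x))^{N-k}w$.

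Finite generation of each $M^\eta$ follows from that of $M$: only finitely many $\eta$ occur among the generators, and projecting the generators gives generators of each summand. Axioms (W2) and (W3) pass to submodules. Morphisms respect the decomposition, since they are $\nnn^+$-equivariant, and $\Hom(M^\eta,N^{\eta'})=0$ for $\eta\neq\eta'$. This yields the direct sum of categories.

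For part (2), fix $M\in\mscrw_\eta$. By (W3), $Z(\ggg_0)$ acts locally finitely, so $M$ decomposes into generalized eigenspaces $M_\chi$ over the maximal ideals $\chi$ of the commutative algebra $Z(\ggg_0)$, as a vector space. The difficulty is that $Z(\ggg_0)$ is not central in $U(\ggg)$. However, $\ggg_i$ for $i\ne0$ are finite-dimensional $\ggg_0$-modules, so $[z,y]$ for $z\in Z(\ggg_0)$ and $y\in\ggg_i$ need not preserve $M_\chi$. So I would interpret the subcategory $\mscrw_{\eta,\chi}$ as the modules generated by $M_\chi$; this matches the triangular structure used in the proof of Proposition~\ref{Thm: unique irr quo}.

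Using the $\bbz$-grading by $I_n$-eigenvalues together with the tensor identity $U(\ggg)\otimes_{U(\sfp)}(-)$, the plan is to label each summand by the central character of its lowest $I_n$-degree part. For this I would compare with the induced objects $\mcal M(\eta,\la)$ and argue that any $M$ has a filtration whose pieces are separated by the $Z(\ggg_0)$-character on the $\sfp$-generating subspace. I expect this step to require care, and it may only give a decomposition at the level of Serre subcategories.
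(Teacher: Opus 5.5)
Your part (1) is correct and follows the paper's route: decompose the finite-dimensional $U(\sfn)v$ into generalized $\eta$-weight spaces, then show the resulting $M^\eta$ is $\ggg$-stable. Your expansion $(x-\eta(x))^N(yw)=\sum_k\binom{N}{k}(\ad x)^k(y)\,(x-\eta(x))^{N-k}w$, with $\ad x$ nilpotent on $\ggg$, is the clean form of the paper's commutator identity. You also fill in points the paper leaves implicit: $U(\ggg_{-1})=\bigwedge(\ggg_{-1})$, so the $\ggg_{-1}$-condition is automatic, and the handling of finite generation and morphisms.

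The gap is part (2), which you do not prove. You replace $\mscr W_{\eta,\chi}$ by a labelling via the lowest $I_n$-layer, but give no argument that objects split or that morphisms between different labels vanish. Such a labelling cannot give a direct sum in general. Suppose $\mcal M(\eta,\la)$ is not simple. By the proof of Proposition~\ref{Thm: unique irr quo}(2), its radical has zero degree-$0$ part. So the radical's lowest layer has a different $I_n$-eigenvalue, hence a different label, yet it embeds in $\mcal M(\eta,\la)$.

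More seriously, the gap cannot be closed for (2) as stated. The element $I_n=\sum_i y_iD_i$ is the identity of $\ggg_0\cong\mathfrak{gl}(n)$, so $I_n\in Z(\ggg_0)$, and $[I_n,X]=iX$ for $X\in\ggg_i$. Now take $\mcal M(\eta,\la)\cong U(\ggg_{\ge1})\otimes L^0(\eta,\la)$, an object of $\mscr W_\eta$. There $I_n$ acts by a scalar $c$ on $L^0(\eta,\la)$ and by $c+1$ on $\ggg_1\otimes L^0(\eta,\la)\neq 0$ (for $n\ge 2$). Since $\mcal M(\eta,\la)$ is cyclic with a unique maximal submodule, it is indecomposable. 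Hence it lies in no single $\mscr W_{\eta,\chi}$ and is not a direct sum of such objects.

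The paper's own proof of (2) takes the generalized $Z(\ggg_0)$-eigenspaces $M_\chi$ and intersects them with $M^\eta$. It breaks exactly where you suspected: $M_\chi$ is only a $\ggg_0$-submodule.

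What is true, and provable by your part-(1) method, is a decomposition by classes $[\chi]$ of central characters modulo shifts by the lattice $\bbz\epsilon_1+\cdots+\bbz\epsilon_n$ spanned by the $\hhh$-weights of $\ggg$. The action map $\ggg_i\otimes M_{\chi_\mu}\to M$ is a $\ggg_0$-homomorphism. By Kostant's theorem on tensoring with finite-dimensional modules, its image lies in $\bigoplus_\nu M_{\chi_{\mu+\nu}}$, with $\nu$ running over the weights of $\ggg_i$. Hence $M_{[\chi]}=\bigoplus_{\chi'\in[\chi]}M_{\chi'}$ is a $\ggg$-submodule, and $\mscr W_\eta=\bigoplus_{[\chi]}\mscr W_{\eta,[\chi]}$ follows as in (1).
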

		
		\begin{proof}  (1) Suppose $M$ is an arbitrarily given object in $\mscrw$. Still consider $\ker(\eta)\subset N$ for $\eta\in \text{ch}(\nnn^+)$ in the same sense as in (\ref{eq: eta}).
			Let
			$$M^\eta:=\{v\in M\mid  \ker(\eta)^{N_{v}}v=0 \}.$$
			Obviously, $M^\eta$ is a $U(\ggg)$-module. For this, we only need to show that $M^\eta$ is closed under $\ggg$-action. In order to check this, we first notice that
			$$y_{1}y_{2}\cdots y_{n}x_{0}=\sum\limits_{l=1}^{n}\sum\limits_{i_{1}<i_{2}<\cdots i_{l}} C_{y_{i_{1}},y_{i_{2}},\dots,y_{i_{l}},x_{0}}ady_{i_{1}}ady_{i_{2}}\cdots ady_{i_{l}}(x_{0})y_{1}\cdots\hat{y_{i_{1}}}\cdots \hat{y_{i_{l}}}\cdots y_{n},$$
			
			with homogeneous elements $x_{0}\in U(\ggg),y_{1},\cdots,y_{n}\in N$, and $C_{y_{i_{1}},y_{i_{2}},\dots,y_{i_{l}},x_{0}} \neq 0$
			
			Then it follows that $\forall v\in M^{\eta}, x_{0}\in U(\ggg)$, $\exists \, n$ large enough, such that $y_{1}y_{2}\cdots y_{n}x_{0}\cdot v=0$, for any homogeneous elements $y_{1},\dots,y_{n}\in \nnn^{+}$($\ggg_{-1}$ respectively) by applying Engel's Theorem on $\nnn^{+}$ and $\ggg_{-1}$ respectively. Hence $M^{\eta}$ is a $U(\ggg)$-module.
			
			Next, we will prove that $M=\bigoplus\limits_{\eta\in \ch(\nnn^{+})}M^{\eta}$, $\forall M\in \mscr W$. Notice that $\forall v\in M$, $W:=U(N)\cdot v$ is finite dimensional, then we can get the decomposition $W=\bigoplus\limits_{\eta\in \ch(N)}W_{\eta}$, where $W_{\eta}$ is the direct sum of all indecomposable components giving rise to $\eta$. We define $$W_{\eta}^{'}=\{v\in W|(x-\eta(x))^{N_{x}}\cdot v=0, \forall \, x\in N\}.$$
			
			It is clear that $W_{\eta}\subseteq W_{\eta}^{'}$. Now we need to prove $W_{\eta}^{'}\subseteq W_{\eta}$. Otherwise, if $W_{\eta}^{'}\cap\bigoplus\limits_{\mu\ne\eta}W_{\mu}\ne\{0\}$, there exists some $x\in N$, such that $\lambda(x)$ takes different value for all different character $\lambda$. Now we take some nonzero $v=\sum v_{\mu}\in W_{\eta}^{'}\cap\bigoplus\limits_{\mu\ne\eta}W_{\mu}$, with $v_{\mu}\in W_{\mu}$. There exists some positive integer $N_{\mu},N_{\eta}$, such that $(t-\mu(x))^{N_{\mu}}v_{\mu}=0,(t-\eta(x))^{N_{\eta}}v=0$.
			Since $(t-\eta(x))^{N_{\eta}}$ and $\Pi_{\mu\ne\eta}(t-\mu(x))^{N_{\mu}}$ are coprime, there exist $a(t),b(t)\in \mathbb{C}[t]$, such that $a(t)(t-\eta(x))^{N_{\eta}}+b(t)\Pi_{\mu\ne\eta}(t-\mu(x))^{N_{\mu}}=1$. Then it follows that $v=a(x)(x-\eta(x))^{N_{\eta}}\cdot v+b(x)\Pi_{\mu\ne\eta}(x-\mu(x))^{N_{\mu}}\cdot v=0$, which is a contradiction.
			Thus $W_{\eta}^{'}= W_{\eta}$, $M=\bigoplus\limits_{\eta\in ch\nnn^{+}}M_{\eta}^{'}$.
			Furthermore, it is easy to see that $M_{\eta}^{'}\subseteq M^{\eta}$ and $M_{\eta}\cap\sum\limits_{\mu\ne\eta}M_{\mu}={0}$. This leads to $M_{\eta}^{'}=M^{\eta}$, $M=\bigoplus\limits_{\eta\in \ch(\nnn^{+})}M^{\eta}$.

			(2) For any $M\in\mscr W$ and central character $\chi$ of $Z(\ggg_{0})$, we denote $M_{\chi}=\{v\in M|(z-\chi(z))^{n_{v}}\cdot v=0\}.$ Since $\forall v\in M$, $Z(\ggg_{0})\cdot v$ is finite dimensional, $Z(\ggg_{0})$ is nilpotent Lie algebra, similar to the proof in (1), we have $Z(\ggg_{0})\cdot v=\bigoplus\limits_{\chi}(Z(\ggg_{0})\cdot v)_{\chi}$, $\chi$ ranging over the central characters of $Z(\ggg_{0})$. It follows that $M=\bigoplus\limits_{\chi}M_{\chi}$. Then $M_{\eta}=\bigoplus\limits_{\chi}M_{\eta}\cap M_{\chi}$ and $\mscr W_\eta=\bigoplus\limits_{\chi}\mscr W_{\eta,\chi}$.

		\end{proof}
		
		

		\subsection{Irreducible objects of Whittaker category and their classifications}
		Due to Theorem \ref{thm: cat decomp}, the classification of simple objects for $\mscrw$ amounts to the one for $\mscrw_{\eta}$.
		
		\begin{thm}\label{thm: 3.2} The following statements hold.
			\begin{itemize}
				\item[(1)] Any simple objects of $\mscrw_\eta$ are isomorphic to some $\mcal{L}(\eta,\la)$ with $\lambda\in \hhh^*$.
				
				\item[(2)] Any simple objects $\mcal{L}(\eta,\la)$ and $\mcal{L}(\eta,\mu)$ are isomorphic if and only if $\mathcal{W}_\eta\bullet\la=\mathcal{W}_\eta\bullet\mu$.
				
			\end{itemize}
		\end{thm}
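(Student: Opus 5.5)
Part (2) is immediate from Proposition \ref{Thm: unique irr quo}(3), the equivalence (b)$\Leftrightarrow$(c), so the work is in part (1). The plan is to show that every simple $S\in\mscrw_\eta$ contains a $\sfp$-submodule on which $\ggg_{-1}$ acts trivially and which is a simple Whittaker $\ggg_0$-module of the form $L^0(\eta,\la)$. Frobenius reciprocity then gives a nonzero map $\mcal M(\eta,\la)\to S$, and $S\cong\mcal L(\eta,\la)$ by uniqueness of the irreducible quotient.

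Let $S$ be simple in $\mscrw_\eta$ and set $S^{\ggg_{-1}}:=\{v\in S\mid \ggg_{-1}v=0\}$.

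\textbf{Step 1: $S^{\ggg_{-1}}\ne0$.} The superalgebra $\ggg_{-1}$ is purely odd and supercommutative, since $[D_i,D_j]=0$. Hence $U(\ggg_{-1})=\bigwedge(\ggg_{-1})$ is finite-dimensional, of dimension $2^n$. For $0\ne v\in S$, the space $\bigwedge(\ggg_{-1})v$ is finite-dimensional, and some maximal-length product $D_{i_1}\cdots D_{i_k}v\neq 0$ is killed by every $D_j$. (The nilpotency in (W2) is in fact automatic here.)

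\textbf{Step 2: $S^{\ggg_{-1}}$ is a $\sfp$-submodule.} Since $[\ggg_{-1},\ggg_0]\subset\ggg_{-1}$, it is $\ggg_0$-stable. It inherits from $S$ local finiteness over $\nnn^+$ and over $Z(\ggg_0)$, and it is locally annihilated by powers of $\ker\eta|_{\nnn^+}$.

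\textbf{Step 3: a Whittaker vector in $S^{\ggg_{-1}}$.} Take $v\in S^{\ggg_{-1}}$. Then $U(\nnn^+)v$ is finite-dimensional, and $\nnn^+$ acts on it with generalized character $\eta$. By Lie's theorem there is a nonzero $w$ in it with $xw=\eta(x)w$ for all $x\in\nnn^+$. Since $Z(\ggg_0)$ commutes with $\nnn^+$ and $Z(\ggg_0)w$ is finite-dimensional, we may further replace $w$ by a simultaneous eigenvector for $Z(\ggg_0)$, staying inside the Whittaker vectors. Then $U(\ggg_0)w\subset S^{\ggg_{-1}}$ is a $\ggg_0$-module generated by a Whittaker vector of type $\eta$ with a central character. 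By McDowell and Mili\v{c}i\'c--Soergel \cite{Mc,MS}, such a module has finite length in the Whittaker category of $\mathfrak{gl}(n)$. Its simple subquotients are of the form $L^0(\eta,\la)$: the simple Whittaker modules with character $\eta$ and a given central character are exactly these. In particular it contains a simple submodule $L\cong L^0(\eta,\la)$ for some $\la\in\hhh^*$.

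\textbf{Step 4: conclusion.} The module $L$ is a $\sfp$-submodule with trivial $\ggg_{-1}$-action. Frobenius reciprocity gives a nonzero homomorphism $\mcal M(\eta,\la)=U(\ggg)\otimes_{U(\sfp)}L^0(\eta,\la)\to S$, which is surjective because $S$ is simple. By Proposition \ref{Thm: unique irr quo}(2), $S\cong\mcal L(\eta,\la)$.

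The main obstacle is Step 3. It requires extracting from the classical theory the fact that a simple $\ggg_0$-submodule exists and is isomorphic to some $L^0(\eta,\la)$ with the correct Whittaker character. This relies on the $\mathfrak{gl}(n)$ Whittaker category having finite length objects, and on the classification of its simples in \cite[Prop. 2.1]{MS}, with the standard-module parametrization $Y(\eta,\la)$ used in Example \ref{sec: example}. If the finite-length input is unavailable in this form, I would instead argue directly: $U(\ggg_0)w$ is a quotient of some $M^0(\eta,\la)$, via the universal property of $Y(\eta,\la)$ as the module generated by a Whittaker vector with prescribed $Z(\ggg_0^\eta)$-character. Then I would use the fact that $M^0(\eta,\la)$ has finite length to pick a simple submodule of the image.
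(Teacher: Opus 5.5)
Your argument is correct and is essentially the paper's proof. You find a nonzero $\ggg_{-1}$-invariant vector and note that the $\ggg_{-1}$-invariants are $\ggg_0$-stable. You then use \cite[Theorem 2.6]{MS} to locate a simple $\ggg_0$-submodule $L^0(\eta,\la)$ of $U(\ggg_0)v$ with trivial $\ggg_{-1}$-action, and conclude by Frobenius reciprocity and uniqueness of the simple quotient of $\mcal M(\eta,\la)$; part (2) comes from Proposition \ref{Thm: unique irr quo}(3). Your extra choice of a Whittaker vector that is also a $Z(\ggg_0)$-eigenvector is harmless but unnecessary, since the finite-length result applies to $U(\ggg_0)v$ directly; note also that your fallback would need $w$ to be a $Z(\ggg_0^\eta)$-eigenvector, not merely a $Z(\ggg_0)$-eigenvector, to use the universal property of $Y(\eta,\la)$.
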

		\begin{proof}
			(1) Let $L$ be any given simple object in $\mscr W$. By Lemma \ref{lem: abelian}, $L$ is an irreducible $U(\ggg)$-module. Denote by $\text{Ann}_{\ggg_{-1}}(L)$  the annihilated  space in $L$ by $\ggg_{-1}$.   We first claim that $\text{Ann}_{\ggg_{-1}}(L)$ is nonzero. This is because $L$ is nonzero, we can take a nonzero vector $v\in L$. By definition, there exits some positive integer $N$ such that $\ggg_{-1}^{N}v=0$. We can take the least one $N_v$ such that $\ggg_{-1}^{N_v}v=0$ but $\ggg_{-1}^{N_v-1}v\ne 0$.  Thus, the nonzero vectors from $\ggg_{-1}^{N_v-1}$ belong to $\text{Ann}_{\ggg_{-1}}(L)$. The claim is proved. Consequently, $\text{Ann}_{\ggg_{-1}}(L)$ is a nonzero $\ggg_0$-module.

			Now consider  $W:= U(\ggg_0)v $ for some nonzero vector $v \in \text{Ann}_{\ggg_{-1}}(L)$. It is clear that $W$ is a $\ggg_0$-Whittaker module (see \cite{MS}). Then by \cite[Theorem 2.6]{MS} we get a composition series for the $\ggg_0$-module $W$. In particular, $W$ contains an irreducible submodule over $\ggg_0$, which is of the form $L^0(\eta,\la)$ for some $\eta\in\text{ch}(\nnn^+)$ and $\la\in \hhh^*$  (see \cite[Therem 2.6]{MS}). { Now by the choice of $v$,   $L^0(\eta,\la)$ is a module for $P=\ggg_{-1} \oplus \ggg_0$ with trivial action of $\ggg_{-1}$ on it}. Thus $L$ is an irreducible quotient of the induced module $\mcal M(\eta,\la)=U(\mf g ) \ot_{U(P)} L^0(\eta,\la)$. Now it follows by Proposition \ref{Thm: unique irr quo} that $L$ is isomorphic to $\mcal L(\eta,\la)$.  This is desired.

			(2) This  follows from Proposition \ref{Thm: unique irr quo}(3).
		\end{proof}

		\subsection{Finite length property only for $W(2)$}\label{sec: finte length}
		\begin{prop}\label{lem: w2fintelength}
			\begin{itemize}
				\item[(1)] 	Every object in $\mscr W$ has finite length for $W(2)$.
				\item[(2)] $\mcal M(\eta,\la)\in \mscr W_{\eta,\chi}$ has infinite length for $W(n)$ with central character $\chi=\chi_{\la}$, when $n >2$.
			\end{itemize}
			
		\end{prop}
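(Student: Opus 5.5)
The plan is to reduce both statements to the $\ggg_0\cong\mf{gl}(n)$-structure of the graded pieces $\mcal M(\eta,\la)_m=U(\ggg_{\geq1})_m\ot L^0(\eta,\la)$ from (\ref{eq: m chi eta}). Since $\ggg_{\geq 1}$ is odd-generated in the relevant degrees, $U(\ggg_{\geq1})$ is finite-dimensional only in a controlled way. For $n=2$ we have $\ggg_1=\calr_2D_1+\calr_2D_2$ and $\ggg_i=0$ for $i\geq2$. Hence $\ggg_{\geq1}=\ggg_1$ is purely odd and abelian, and $U(\ggg_{\geq1})=\bigwedge(\ggg_1)$ is finite-dimensional.

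For (1), argue first for $\mcal M(\eta,\la)$. As a $\ggg_0$-module it is $\bigwedge(\ggg_1)\ot L^0(\eta,\la)$, a finite-dimensional module tensored with a simple Whittaker module. By Kostant/McDowell–Mili\v{c}i\'{c}–Soergel theory (\cite{MS}, the composition series of \cite[Theorem 2.6]{MS}), such a tensor product has finite length over $\ggg_0$. Any $\ggg$-composition series is in particular refined by a $\ggg_0$-one, so $\mcal M(\eta,\la)$ has finite length over $\ggg$.

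For a general $M\in\mscrw$, use (W1) to pick finitely many generators. Using (W2) and the argument of Theorem \ref{thm: 3.2}(1), $M$ admits a finite filtration whose layers are quotients of modules of the form $U(\ggg)\ot_{U(\sfp)}V$. Here $V$ is a finitely generated $\ggg_0$-Whittaker module, finitely generated by (W1) applied to $\ggg_{-1}$-invariants, with $\ggg_{-1}$ acting trivially. More precisely, filter $M$ by the $\ggg_{-1}$-socle series, which is finite since $\ggg_{-1}$ acts nilpotently on the finitely generated module and $\bigwedge(\ggg_{-1})$ is finite-dimensional. Each $V$ has finite $\ggg_0$-length by \cite{MS} and (W3), and induction is exact because $U(\ggg)$ is free over $U(\sfp)$. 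Hence each induced module has a finite filtration by the $\mcal M(\eta',\la')$-type modules and thus finite length. The main care is justifying that the $\ggg_{-1}$-invariant pieces are finitely generated over $\ggg_0$. For this I would use that $U(\ggg)=U(\ggg_{\geq1})U(\ggg_0)\bigwedge(\ggg_{-1})$ with the outer factors finite-dimensional when $n=2$.

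For (2), take $n>2$. Then $\ggg_{n-1}\neq0$ and $\ggg_{\geq1}$ contains even elements, for instance in $\ggg_2$ the elements $y_1y_2y_3D_k$. Hence $U(\ggg_{\geq1})$ is infinite-dimensional and $\mcal M(\eta,\la)_m\neq0$ for infinitely many $m$. The plan is to produce an infinite strictly decreasing chain of submodules
\[
\mcal M^{(k)}:=\bigoplus_{m\geq k}\mcal M(\eta,\la)_m ,
\]
or rather the submodules they generate. Since $\ggg_{-1}$ lowers $I_n$-degree, these are not automatically submodules. So I would instead use the $\ggg_0$-decomposition and central character. Pick an even element $x\in\ggg_2$ commuting with enough of $\ggg_0$ that its powers $x^k\ot w$, for $w$ a Whittaker vector, are nonzero (PBW) and generate $\ggg$-submodules $U(\ggg)x^k\ot w$. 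Then show $U(\ggg)x^{k+1}w\subsetneq U(\ggg)x^kw$ by comparing lowest $I_n$-degrees. The hard step is showing that $\ggg_{-1}$ cannot bring $x^{k+1}w$ back down to degree $2k$. I would try first to choose $x$ to be $\ggg_{-1}$-annihilated modulo higher terms. The condition $n>2$ guarantees a nonzero such $x$ in positive even degree, for example a suitable element of $\ggg_{n-1}$ or its square, and one then verifies $[\ggg_{-1},x]$ lies in the ideal generated by $x$-multiples. The central character $\chi_\la$ is preserved along the chain by (W3) and Theorem \ref{thm: cat decomp}(2), placing all subquotients in $\mscrw_{\eta,\chi}$.
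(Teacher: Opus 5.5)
Your argument for (1) is correct and is essentially the paper's. You reduce, via exactness of $U(\ggg)\ot_{U(\sfp)}-$, to the induced modules $\mcal M(\eta,\la)$. Then you use that for $n=2$ one has $U(\ggg_{\geq1})=\bigwedge(\ggg_1)$, so $\mcal M(\eta,\la)\cong\bigwedge(\ggg_1)\ot L^0(\eta,\la)$ has finite $\ggg_0$-length. The paper reaches the reduction by viewing $M$ as a quotient of $U(\ggg)\ot_{U(\sfp)}U(\sfp)V$ with $V=\sum_iU(\sfn)m_i$, instead of using a $\ggg_{-1}$-socle filtration. Your own closing remark already does more: $U(\ggg)=U(\ggg_0)\bigwedge(\ggg_1)\bigwedge(\ggg_{-1})$ with finite-dimensional exterior factors shows that every $M\in\mscrw$ is finitely generated over $\ggg_0$. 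So $M$ is a Whittaker $\ggg_0$-module of finite length, and no filtration is needed.

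Part (2), however, has a genuine gap. Everything rests on the strictness $U(\ggg)(x^{k+1}\ot w)\subsetneq U(\ggg)(x^{k}\ot w)$. You reduce this to the claim that $\ggg_{-1}$ cannot bring $x^{k+1}\ot w$ back down to degree $2k$, and that claim is false. Products of two elements of $\ggg_{-1}$ lower the degree by exactly $2$. Already for $n=3$ and $x=y_1y_2y_3D_1$, the vector $D_2D_3\cdot(x^{k+1}\ot w)$ is nonzero of degree $2k$ for $k\geq1$.

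The proposed remedy cannot work either:
\begin{itemize}
\item Since $[D_i,\sum_kf_kD_k]=\sum_kD_i(f_k)D_k$, no nonzero element of $\ggg_{\geq0}$ is annihilated by $\ggg_{-1}$. This includes every element of $\ggg_{n-1}=\calr_n D_1+\cdots+\calr_nD_n$, and these elements moreover square to $0$ when $n$ is even.
\item For homogeneous $x$ of positive degree, $[\ggg_{-1},x]$ has degree $\deg x-1$. So it can never lie in $U(\ggg_{\geq0})x$ unless it vanishes.
\item The central character plays no role in the length argument.
\end{itemize}

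The missing idea is a crude degree bound. By PBW, $U(\ggg)=U(\ggg_{\geq0})\bigwedge(\ggg_{-1})$, and $\bigwedge(\ggg_{-1})$ lowers degree by at most $n$. Hence $U(\ggg)v\subseteq\bigoplus_{m\geq\deg v-n}\mcal M(\eta,\la)_m$ for every homogeneous $v$. It follows that $x^{k}\ot w\notin U(\ggg)(x^{k'}\ot w)$ whenever $2k'-n>2k$. Your chain therefore becomes strictly decreasing once $k$ runs through $0,n,2n,\dots$ instead of through consecutive integers.

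Equivalently, take the submodules generated by $\bigoplus_{m\geq k}\mcal M(\eta,\la)_m$ for $k$ in steps of $n+1$. They form a strictly decreasing chain, since $\mcal M(\eta,\la)_m\neq0$ for all $m$ when $n\geq3$. This same bound is what the paper's own one-line passage from the $\ggg_{\geq0}$-chain $M^k$ to a chain of $\ggg$-submodules with nonzero subquotients needs in order to be rigorous.
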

		
		\begin{proof}
			Let $M$ be any object in $\mscr W$.  By Theorem \ref{thm: cat decomp}, we can suppose that  $U(\mf g)$-module $M$ is decomposed into a direct  sum of finitely many $M_{\eta,\chi}$ for some different pairs $(\eta,\chi)$ with  $M_{\eta,\chi}\in \mscr W_{\eta,\chi}$.
			Without loss of generality, we suppose $M\in \mscr W_{\eta,\chi}$.

			By definition, $M$ is finitely generated over $U(\ggg)$. Suppose $M=\sum_{i=1}^m U(\ggg)m_i$. According to the property of  locally-finiteness over $U(\nnn+\ggg_{-1})$ associated with $\chi$,
			$V:=\sum\limits_{i=1}^m U(\nnn+\ggg_{-1})m_i$ is finite-dimensional,  with $\mf g_{-1}$-nilpotent action. Consequently, $M\in \mscr W_{\eta,\chi}$ admits a $U(\sfp)$-submodule $U(\sfp)V$, which is easily proved to be a Whittaker module over $U(\ggg_0)$.   By \cite[Theorem 2.6(1)]{MS}, $U(\sfp)V$ has finite length over $U(\ggg_0)$ and each composition factor of $U(\sfp)V$ as a $U(\sfp)$-module is a simple Whittaker $\ggg_0$-module associated with the pair $(\eta,\chi)$ with trivial $\ggg_{-1}$-action.
			With this observation and taking a fact into account that  the $U(\ggg)\otimes_{U(\sfp)} -$ is an exact functor from the Whittaker category of $\sfp$ with trivial $\ggg_{-1}$-action (associated with the pair $(\eta,\chi)$), to the category $\mscr W_{\eta,\chi}$, we only need to show that  $\mcal M(\eta,\la)$ has finite length over $U(\ggg) $ with $\chi=\chi_{\la}$.
			%
			%
			
			For this, we note that
			$$\mcal M(\eta,\la)=\dis{\bgop_{m =0}^{2} } U(\mf g_{\geq 1})_m \ot L^0(\eta,\la),  $$
			since $U(\mf g_{\geq 1})_m=0 $ for all $ m \geq 3$. Furthermore we note that $U(\mf g_{\geq 1})_m$ is finite dimensional module for $\mf g_0$ for all $m$. Hence $\mcal M(\eta,\la)$ is a finite direct sum of Whittaker modules for { $\mf g_0$} by Theorem H of \cite{K}. This proves that $\mcal M(\eta,\la)$ has finite length as a $\mf g_0$-module and hence have finite length as $W(2)$-module, which completes the proof of (1).
			
			To prove (2), we assume that $\mcal M(\eta,\la)$ has finite length. Then $\mcal M(\eta,\la)$ is an Artinian module. Now note that for each $k \in \N$, $M^k= \dis{\bgop_{i \geq k}\mcal M(\eta,\la)_i   }$ is a $\mf g_{\geq 0}$-submodule of $\mcal M(\eta,\la)$, where $\mcal M(\eta,\la)_i  =U(\mf g_{\geq 1})_i \ot L^0(\eta,\la)$.  Furthermore, it is easy to observe that $\mcal M(\eta,\la)=M^0 \supset M^1 \supset \dots $ is a chain of $\mf g_{\geq 0}$- submodules with $M^k/M^{k+1} \neq 0$, since $U(\mf g_{\geq 1})_i \neq 0 $ for all $i$. Therefore it induces a chain of $\mf g$-submodules and each of whose subquotients are non-zero, i.e $\mcal M(\eta,\la)$ is not artinian, a contradiction. This completes the proof.
		\end{proof}


		
		\section{Generalized Backelin functor from the category $\mathcal{O}$ to $\mscrw$ and applications in the case $W(2)$}\label{sec: sec three}
		\subsection{}		Following Backelin \cite{Back} we construct a generalized Backelin functor for $W(n)$ and study some properties of this functor.  Then we give some application in the case of $W(2)$.  
		\subsubsection{The parabolic BGG category $\mathcal{O}$}

		We first recall the parabolic BGG category for $\ggg$ introduced in \cite{DSY}
		\begin{defn}\label{category O} The parabolic BGG category $\calo$ is a full category of $U(\ggg)\hmod$,  whose objects $M$  are required to be finitely-generated over $U(\ggg)$ satisfying the following three axioms:
			\begin{itemize}
				\item[(O1)] $M$ is an admissible $\mathbb{Z}$-graded $\ggg$-module, i.e., $M=\bigoplus\limits_{i\in\mathbb{Z}} M_{i}$ with $\dim M_{i}<+\infty$, and
				$\ggg_{i}M_{j}\subseteq M_{i+j},\forall\,i,j$.
				\item[(O2)] $M$ is locally finite as a $\sfp$-module where $\sfp=\ggg_{\leq 0}:=\ggg_{{-1}}\oplus\ggg_{0}$.
				{ \item[(O3)] $M$ is $\hhh$-semisimple, i.e., $M$ is a weight module: $M=\bigoplus_{\lambda\in\hhh^*}M_{\lambda}$.}
			\end{itemize}
			The morphisms in $\calo$ are the $\ggg$-module morphisms that preserve the $\mathbb{Z}$-grading, i.e.,
			$$\Hom_{\calo}(M, N)=\{f\in\Hom_{U(\ggg)}(M, N)\mid f(M_{i})\subseteq N_{i},\,\forall\,i\in\mathbb{Z}\},\,\forall\,M,N\in\calo.$$
		\end{defn}

		\subsubsection{An automorphism $\bar\sigma$ of $W(n)$}	
		Let $\sigma=\prod_{j=1}^{[{n\over2}]} (j,n+1-j) $ be an element of the symmetric group $\mathfrak{S}_n$ where $[{n\over 2}]$ denotes the greatest integer not bigger than $n\over 2$. Now define an automorphism of $\mcal R$  via the following transform on the set of  generators $\{y_i\mid i=1,\ldots,n\}$
		$$ y_i \to y_{n+1-i} , \, \, \forall i \in [1,n]:=\{1,2,\ldots,n\} $$
		and denote it by the same notation $\sigma$.
		Then $\sigma$ induces an automorphism $\bar{\sigma}$ of $W(n)$ via
		$$\bar \sigma(X)= \sigma \circ X \circ \sigma^{-1}, \,\,\,\, \forall X \in W(n).$$
		By a straightforward check it is easily  known that  for $f\in \calr$
		$$\bar \sigma(fD_j)=\sigma(f)\bar\sigma(D_j)= \sigma(f)D_{\sigma(j)}.$$
		This implies that $\bar \sigma(\nnn^+)=\nnn^-$, $\bar \sigma(\nnn^-)=\nnn^+$ and $\bar \sigma(\mf h)=\mf h$.
		
		On the other side, $\sigma\in \mathfrak{S}_n$ gives rise to a linear automorphism of $\mf h^*$. Precisely, for $\{\epsilon_1, \cdots,\epsilon_n \}$ the basis of $\mf h^*$ which are subject to $\epsilon_i(y_jD_j)=\de_{ij}$, $i,j=1,\ldots,n$,   we define $\sigma(\epsilon_i)=\epsilon_{\sigma(i)}$.

		\subsubsection{The $\bar\sigma$-twisted duals and a functor from $\mathcal{O}$ to $\mscrw_\eta$} 	
		\begin{defn} Define a functor $\bfB$ from $\calo$ to $U(\mf g)$-mod, which sends  $M = \bigoplus_{\la \in \mathfrak{h}^*} M_\la$ in $\mathcal{O}$ to $\overline{M} := \prod_{\la \in \mathfrak{h}^*} M_\la$ (the complete of $M$) in $U(\mf g)$-mod.
		\end{defn}

		
		Furthermore,  if $M=\sum_{\lambda\in\hhh^*}M_\lambda \in \mathcal{O}$, then we define $M^\vee = \bigoplus_{\la \in \mathfrak{h}^*} (M_\la)^*$   with $\ggg$-action on $M^\vee$: for $X\in \ggg$ and $\phi\in M^\vee$,
		\begin{align}\label{eq: twist dual}
			X.\phi=\phi \circ \bar\sigma(X).
		\end{align}
		It's easy to verify that $M^\vee$ still lies in the category $\mathcal{O}$ and the canonical homomorphism $M \to M^{\vee \vee}$ is an isomorphism, which can be extended to an isomorphism of $U(\mathfrak{g})$-modules
		\[
		\overline{M} \to M^{\vee'},
		\]
		where for any $N\in U(\ggg)\hmod$, $N'$ denotes a new $U(\ggg)$-module whose underlying space is the linear dual of $N$, and whose $U(\ggg)$-module structure is defined as in (\ref{eq: twist dual}). This module  $N'$ is called the $\bar\sigma$-twisted dual of $N$.

		\subsubsection{Generalized Backeline functor}  Generally, for $M \in U(\ggg)\hmod$, we can define
		\[
		\Gamma_\eta(M) := \{ m \in M :  (\operatorname{Ker} \eta)^k \cdot m = 0, \,\, \text{for some} \, k \} \in {U}(\mathfrak{g})\text{-mod};
		\]
		and for $M \in \mathcal{O}$, we put
		\[
		\overline{\Gamma}_\eta(M) := \{ m \in \overline{M} : (\operatorname{Ker} \eta)^k \cdot m = 0, \,\, \text{for some} \, k  \} \in {U}(\mathfrak{g})\text{-mod},
		\]
		here $\operatorname{Ker} \eta$ denotes the kernel of $\eta$ in ${U}(\nnn)$.  \\

		\begin{lem}\label{exact fun}
			$\overline{\Gamma}_\eta$ defines an exact functor from $\mathcal{O}$ to $\mscr W_\eta$.
			
		\end{lem}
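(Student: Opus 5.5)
This follows Backelin's argument for $\mathfrak{gl}(n)$: first treat functoriality and well-definedness, then exactness, then the Whittaker axioms, which are the hardest part.

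\textbf{Functoriality and well-definedness.} A morphism $f:M\to N$ in $\calo$ preserves weight spaces, so it extends componentwise to a $U(\ggg)$-map $\overline{M}\to\overline{N}$. This map commutes with the action of $\operatorname{Ker}\eta$, hence it sends $\overline{\Gamma}_\eta(M)$ into $\overline{\Gamma}_\eta(N)$. That $\overline{\Gamma}_\eta(M)$ is a $U(\ggg)$-submodule of $\overline{M}$ is checked exactly as for $M^\eta$ in the proof of Theorem \ref{thm: cat decomp}(1). For $v$ killed by $(\operatorname{Ker}\eta)^k$ and $x\in U(\ggg)$, we commute a long product of elements of $\operatorname{Ker}\eta$ past $x$. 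This uses that $\ad$ of $\nnn^+$ and of $\ggg_{-1}$ is nilpotent on each finite-dimensional piece $U(\ggg)_{\le d}$ (Engel), and that $[\ker\eta,\cdot]$ shifts $\hhh$-weights.

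\textbf{Exactness.} We use the identification $\overline{M}\cong (M^\vee)'$ recorded above. Taking weight spaces and then finite duals is exact on $\calo$, so $M\mapsto\overline{M}$ is exact. Left exactness of $\overline{\Gamma}_\eta$ is then immediate, since it is the subfunctor of $\ker\eta$-locally-nilpotent vectors. For right exactness, let $M\twoheadrightarrow N$ be surjective and $\bar n\in\overline{\Gamma}_\eta(N)$. We must lift $\bar n$ to a vector of $\overline{M}$ that is still locally annihilated by powers of $\operatorname{Ker}\eta$.

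\begin{itemize}
\item Grade everything by the $\hhh$-weight decomposition (and the $\bbz$-grading in (O1)); $\operatorname{Ker}\eta$ is spanned by $\nnn^+$-words shifted by scalars together with $\ggg_{-1}$.
\item Following Backelin, observe that $v\in\overline{M}$ is annihilated by $(\operatorname{Ker}\eta)^k$ if and only if its components satisfy a system of finite linear equations in finitely many weight spaces at a time.
\item A Mittag-Leffler argument on the finite-dimensional weight spaces then produces the lift, because the maps involved are surjective between finite-dimensional spaces.
\end{itemize}

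An alternative is Backelin's description $\overline{\Gamma}_\eta(M)\cong$ a twisted-dual construction that is visibly exact. Since $\ggg_{-1}$ acts nilpotently and $\ggg_{-1}\oplus\nnn^+=\sfn$ is finite dimensional, the filtration by powers of $\operatorname{Ker}\eta$ restricted to each $\bbz$-degree stabilizes, and exactness should reduce to the $\ggg_0$-case, which Backelin proved. This reduction via $\bbz$-degrees is the step I expect to be most delicate.

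\textbf{Landing in $\mscrw_\eta$.} Here I verify (W1)--(W3).
\begin{itemize}
\item (W2) is built in for $\ggg_{-1}$, since the definition kills vectors by powers of $\ker\eta\supset\ggg_{-1}$. Local finiteness over $\nnn^+$ follows because a vector killed by $(\operatorname{Ker}\eta)^k$ generates over $U(\nnn^+)$ a quotient of $U(\nnn^+)/(\operatorname{Ker}\eta|_{\nnn^+})^k$, which is finite dimensional.
\item (W3) holds because $Z(\ggg_0)$ acts on $M\in\calo$ locally finitely with finitely many generalized central characters. This property passes to the completion, since $M$ has finitely many $\ggg_0$-composition-type constituents per degree, and the sum over degrees is finite in each $\hhh$-weight coset.
\item (W1) is the main obstacle. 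I would argue by exactness and induction on length in $\calo$, reducing to simple objects (or Kac-type induced modules $U(\ggg)\otimes_{U(\ggg_{\ge0})}L^0$). For these, $\overline{\Gamma}_\eta$ of the $\ggg_0$-pieces is finitely generated by Backelin's theorem for $\mathfrak{gl}(n)$, and inducing up preserves finite generation.
\end{itemize}
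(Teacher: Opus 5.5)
Your submodule check and your (W2) check are fine. The rest of your plan, like the paper's proof (which only points to \cite[Lemma 3.2]{Back}), transplants Backelin's argument without using what is special about this $\calo$. Its objects are locally finite over all of $\ggg_0$, and by (O1) every graded piece $M_j$ is a finite-dimensional $\ggg_0$-module. Since $I_n=\sum_i y_iD_i$ acts on $\ggg_i$ by $i$, in a finitely generated $M$ each weight occurs in only finitely many degrees. Hence $\overline{M}=\prod_\lambda M_\lambda=\prod_j M_j$, with $\ggg_0$ acting degreewise.

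Now let $\eta\neq0$ and choose a simple root vector $x$ with $\eta(x)\neq0$. It is nilpotent on each $M_j$, so $x-\eta(x)$ is invertible on each $M_j$ and therefore injective on $\overline{M}$; thus $(\operatorname{Ker}\eta)^k v=0$ forces $v=0$. So $\overline{\Gamma}_\eta\equiv 0$ on $\calo$ for $\eta\neq0$, and the lemma is trivially true there. The machinery you invoke has nothing to act on: the Mittag-Leffler lifting, and Backelin's theorem applied to the $\ggg_0$-pieces, which are finite-dimensional and so are killed by Backelin's functor as well.

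The only substantive case is $\eta=0$, and there your (W3) step is false once $n\ge3$, and (W1) fails too. An object of $\calo$ need not have finitely many generalized central characters. For $M={}^0\Delta(\lambda)$, PBW monomials in the nonzero even space $\ggg_2$ give $M_{2m}\neq0$ for all $m$, while the central element $I_n\in Z(\ggg_0)$ acts on $M_j$ by $j+\lambda(I_n)$. Choose nonzero $v_{2m}\in M_{2m}$ annihilated by $\nnn^+$ and put $v=(v_{2m})_m\in\overline{M}$. Then $\nnn^+v=0$, and even with $\operatorname{Ker}\eta$ taken in $U(\sfn)$ one has $(\operatorname{Ker}\eta)^{n^2+1}v=0$. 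Indeed, move the $\ggg_{-1}$-letters of a word to the left using $[\nnn^+,\ggg_{-1}]\subseteq\ggg_{-1}$: each absorbs at most $n-1$ letters of $\nnn^+$, and at most $n$ of them survive in $\bigwedge\ggg_{-1}$. So a word of length $>n^2$ becomes a combination of words ending in a letter of $\nnn^+$. Hence $v\in\overline{\Gamma}_0(M)$, yet the vectors $I_n^r v$ ($r\ge0$) are linearly independent, so $Z(\ggg_0)v$ is infinite-dimensional. Likewise $\overline{\Gamma}_0(M)$ contains $\prod_m (M_{2m})^{\nnn^+}$, which has uncountable dimension, so it is not finitely generated over $U(\ggg)$.

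Your fallback of induction on length is also unavailable. Every simple object of $\calo$ is finite-dimensional, because a homogeneous vector generates a submodule reaching down at most $n$ degrees, whereas ${}^0\Delta(\lambda)$ is not. Moreover, the standard objects here are induced from $\sfp$, not from $\ggg_{\ge0}$. So for $n\ge3$ the statement holds only for $\eta\neq0$, and then trivially. For $n\le2$, $\calo$ consists of finite-dimensional modules and $\overline{\Gamma}_0$ is just the inclusion.

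Finally, your Mittag-Leffler sketch does not prove right exactness. A vector killed by $(\operatorname{Ker}\eta)^k$ generally lifts only to one killed by a larger power. Controlling that power uniformly is an Artin--Rees-type statement, not a consequence of finite-dimensional weight spaces. In the present setting exactness does hold, for a simpler reason: $\overline{\Gamma}_0(M)$ consists of the vectors of $\prod_j M_j$ with uniformly bounded $\nnn^+$-nilpotency. The surjections $M_j\to N_j$ are between semisimple $\ggg_0$-modules, so they admit $\ggg_0$-equivariant sections, and these preserve that bound.
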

		\begin{proof}
			The proof is parallel to that of \cite[Lemma 3.2]{Back} with  aid of $\bar \sigma$.
		\end{proof}

		\subsection{ Standard module in $\mcal O$:}	Let $\Lambda^+$ be the set of dominant integral weights relative to the standard Borel subalgebra $\mathfrak{b}:=\hhh+\nnn^+$ of $\ggg_{0}$. Then finite-dimensional irreducible $\ggg_{0}$-modules are parameterized by ${\Lambda^+}\times\mathbb{Z}$. For any $\lambda\in {\Lambda^+}$, let ${}^dL^0(\lambda)$ be the simple $\ggg_{0}$-module concentrated in a single degree $d$ with the highest weight $\lambda$.  Set ${}^d\Delta(\lambda)=U(\ggg)\otimes_{U(\sfp)}{}^dL^0(\lambda)$,  where ${}^dL^0(\lambda)$ is regarded as a $\sfp$-module with trivial $\ggg_{{-1}}$-action. Then ${}^d\Delta(\lambda)$ is a standard module of depth $d$, and
		$\{{}^d\Delta(\lambda)\mid \lambda\in {\Lambda^+},d\in\mathbb{Z}\}$ constitutes a class of so-called standard modules of depth $d$ for $\ggg$ in the usual sense. We forget the depth and denote $M(\la)={}^0\Delta(\lambda).$
		
		\subsection{Backelin functor for $\mf g_0$ and its application in the case $W(2)$} Logically continuing the arguments in \S\ref{sec: finte length}, we investigate the composition series of  the standard modules when the finite-length composition series of objects in $\mscrw$ happen. This is just the case $W(2)$.
		
		In the following, we set $\ggg=W(2)$.  	Let $\Res^{\mf g}_{\mf g_0}:\mf g$-mod $\to$ $\mf g_{0}$-mod be the restriction functor and $\mscr W^{\mf g_0}$ be the category of Whittaker module for $\mf g_0$. Then by \cite{Back}, the Backelin functor $\Gamma_\eta^0:\mcal O ^{\mf g_0} \to \mscr W^{\mf g_0}$, for $\eta \in \ch(\mbf n)$ is exact.
		
		Let $W(2)=\nnn^- \op \hhh \op \nnn$ be the triangular decomposition with $\nnn=\bbf y_{1}D_{2}+\bbf y_{1}y_{2}D_{1}+\bbf y_{1}y_{2}D_{2}$, $\nnn^{-}=\bbf y_{2}D_{1}+\bbf D_{1}+\bbf D_{2}$ and $\hhh=\bbf y_{1}D_{1}+\bbf y_{2}D_{2}$. For any $\mf g$-module $M$ in category $\mcal O$, we denote $\ov\Gamma_{\eta}: \mcal O \to \mscr W$ by sending $M$ to be the submodule consisting of vectors in $\ov{M}$ annihilated by some power of $x-\eta(x)$, $\forall x\in \nnn_{\bar{0}}=\bbf y_{1}D_{2}$. For convenience, we call a weight $\mu$ to be $\mbf n_\eta$ anti-dominant, if $\mu|_{\nnn_{\eta}}\ne 0$ and $\mu$ is anti-dominant.
		
		\begin{prop}
			Let  $\mf g=W(2)$, then we have the following.
			\begin{itemize}
				\item[(1)] 	\begin{equation}
					\ov\Gamma_\eta(M(\la))\cong
					\begin{cases}
						\mcal M(\sigma\la, \eta), \,\, \text{if}\,  \sigma \la  \,\,\text{is} \,\, \mbf n_\eta \text{ anti-domiant}\\
						0 \,\, otherwise.
					\end{cases}
				\end{equation}
				\item[(2)] 	\begin{equation}
					\ov\Gamma_\eta(L(\la))\cong
					\begin{cases}
						\mcal L (\sigma\la, \eta), \,\, \text{if}\,  \sigma \la  \,\,\text{is} \,\, \mbf n_\eta \text{ anti-domiant}\\
						0 \,\, otherwise.
					\end{cases}
				\end{equation}
				
				\item[(3)] $[M(\la):L(\mu)]=[\mcal M(\sigma \la,\eta):\mcal L(\sigma \mu, \eta)]$.

			\end{itemize}
			
		\end{prop}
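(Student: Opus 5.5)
The plan is to reduce everything to Backelin's results for $\ggg_0=\mathfrak{gl}(2)$ by compatibility of $\ov\Gamma_\eta$ with parabolic induction from $\sfp$, and then use exactness (Lemma \ref{exact fun}).

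First I would establish a commutation isomorphism: for a $\ggg_0$-module $V$ in $\calo^{\ggg_0}$, regarded as a $\sfp$-module with trivial $\ggg_{-1}$-action,
$$\ov\Gamma_\eta\big(U(\ggg)\otimes_{U(\sfp)}V\big)\cong U(\ggg)\otimes_{U(\sfp)}\Gamma^0_{\eta}(\ov V).$$
For $\ggg=W(2)$ we have $U(\ggg_{\ge1})=\bigwedge(\ggg_1)\otimes U(\ggg_2)$ with $\ggg_1$ even of dimension $4$ and $\ggg_2=\bbf y_1y_2D_1+\bbf y_1y_2D_2$ odd. As in the proof of Proposition \ref{lem: w2fintelength}(1), the induced module is $\bigoplus_{m=0}^{2}U(\ggg_{\ge1})_m\otimes V$ as a $\ggg_0$-module, and each summand is a finite-dimensional module tensored with $V$. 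The $I_n$-grading is finite, so completion commutes with this finite sum, and $\ov{F\otimes V}=F\otimes\ov V$ for finite-dimensional $F$. The key point is that $\Gamma^0_\eta$ commutes with $F\otimes-$; this holds because $\ker\eta$ acts locally nilpotently on $F$ and the coproduct is compatible with this. Hence $\ov\Gamma_\eta$ of the induced module is the sum of $U(\ggg_{\ge1})_m\otimes\Gamma^0_\eta(\ov V)$. I would then check that this $\ggg$-stable subspace is generated by its degree-$0$ part $\Gamma^0_\eta(\ov V)$, on which $\ggg_{-1}$ acts trivially. By the universal property, this gives a surjection from the induced module, and dimension/grading comparison in each degree shows it is an isomorphism. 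The $\bar\sigma$-twist enters exactly here, through $\ov M\cong M^{\vee\prime}$, and it converts $\la$ into $\sigma\la$.

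For (1), I would apply this with $V={}^0L^0(\la)$ together with Backelin's theorem for $\mathfrak{gl}(2)$. That theorem gives $\Gamma^0_\eta(\ov{L^0(\la)})$ (finite-dimensional, or a Verma module in the relevant twisted sense) as the standard Whittaker module $L^0(\eta,\sigma\la)$ under the $\nnn_\eta$-anti-dominance condition on $\sigma\la$, and $0$ otherwise. I expect this step to be the main obstacle. Finite-dimensional $L^0(\la)$ are killed when $\eta\neq0$, so the statement must really be read with $L^0(\la)$ replaced by the appropriate $\ggg_0$-Verma-type module in $\calo^{\ggg_0}$. I would first pin down which $\ggg_0$-module the paper intends and match Backelin's Theorem on $\Gamma^0_\eta(M^0(\mu))$ via $\bar\sigma$.

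For (2), I would use exactness: $\ov\Gamma_\eta(L(\la))$ is a quotient of $\ov\Gamma_\eta(M(\la))=\mcal M(\sigma\la,\eta)$. To show it is simple (or zero), I would argue via the $\bar\sigma$-twisted dual: $\ov\Gamma_\eta$ is computed inside $L(\la)^{\vee\prime}$, whose socle is controlled by the simplicity of $L(\la)^\vee$. Any nonzero submodule meets degree $0$, where Backelin's $\mathfrak{gl}(2)$ result gives simplicity. Combined with Proposition \ref{Thm: unique irr quo}(2), this yields $\mcal L(\sigma\la,\eta)$ or $0$.

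For (3), I would take a composition series of $M(\la)$, which is finite by Proposition \ref{lem: w2fintelength}(1). Applying the exact functor $\ov\Gamma_\eta$ and using (2), every factor $L(\mu)$ contributes $\mcal L(\sigma\mu,\eta)$ or $0$. This gives equality of multiplicities for those $\mu$ with $\sigma\mu$ being $\nnn_\eta$-anti-dominant. Here Theorem \ref{thm: 3.2}(2) is used to ensure that distinct surviving $\mu$ give non-isomorphic factors.
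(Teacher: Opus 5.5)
For (1) and (3) your plan follows the paper's proof. You write the completed standard module as $U(\mathfrak{g}_{\geq1})\otimes(\text{completed degree-}0\text{ part})$, compute $\overline{\Gamma}_\eta$ with Backelin's $\mathfrak{gl}(2)$ functor, recover the $\mathfrak{g}$-structure by Frobenius reciprocity from the degree-$0$ part (which $\mathfrak{g}_{-1}$ kills), and get (3) from exactness.

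Your tensor identity $\Gamma^0_\eta(F\otimes N)=F\otimes\Gamma^0_\eta(N)$ replaces the paper's ``same number of $\mathfrak{g}_0$-composition factors'' step more cleanly; it is exactly what justifies the paper's chain of equalities of restrictions. However, your reason for it is misstated. $\ker\eta$ does not act nilpotently on $F$: if $X:=y_1D_2$ and $c:=\eta(X)\neq0$, then $X-c$ is invertible on $F$. The correct reason is that $X$ is nilpotent on $F$, and on $F\otimes N$ one has $X-c=X_F\otimes1+1\otimes(X-c)_N$, a sum of commuting operators whose first term is nilpotent.

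Also, for $W(2)$ the algebra $\mathfrak{g}_1=\mathbb{F}y_1y_2D_1+\mathbb{F}y_1y_2D_2$ is odd of dimension $2$ and $\mathfrak{g}_{\geq2}=0$, so $U(\mathfrak{g}_{\geq1})=\bigwedge(\mathfrak{g}_1)$. Your description of a $4$-dimensional even $\mathfrak{g}_1$ is wrong, though only the grading in degrees $0,1,2$ is actually used.

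The obstacle you flag is genuine, and the paper does not overcome it. With $L^0(\lambda)$ finite-dimensional, $\dim M(\lambda)=4\dim L^0(\lambda)$. Hence $\overline{M(\lambda)}=M(\lambda)$, $y_1D_2$ acts nilpotently, and $\overline{\Gamma}_\eta(M(\lambda))=0$ for every $\eta\neq0$. The $\bar\sigma$-twist cannot help, since everything stays finite-dimensional.

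The paper simply writes $\overline{M(\lambda)}=U(\mathfrak{g}_{\geq1})\otimes\overline{L^0(\sigma\lambda)}$ and quotes Backelin for $\Gamma^0_\eta(L^0(\sigma\lambda))=L^0(\sigma\lambda,\eta)$. That is, it tacitly takes $L^0(\sigma\lambda)$ to be the simple highest weight $\mathfrak{gl}(2)$-module of highest weight $\sigma\lambda$, which is infinite-dimensional (a Verma module) exactly when $\sigma\lambda$ is anti-dominant. It must be the simple module, not $M^0(\sigma\lambda)$, because Backelin's functor never kills Verma modules and so the ``$0$ otherwise'' clause would fail. With this reading, your argument for (1) is complete.

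For (2) your route genuinely differs from the paper's. The paper passes to $W(2)\cong\mathfrak{osp}(2,2)$, identifies $\overline{\Gamma}_\eta(L(\lambda))$ with the superalgebra functor there, and imports Theorems 9 and 20 of \cite{CWC}. You stay inside $W(2)$ and need only Backelin for $\mathfrak{gl}(2)$, which avoids the loosely justified transfer.

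Your argument works, but the twisted-dual/socle remark is unnecessary. What you must supply instead is the following.
(i) $\overline{\Gamma}_\eta(L(\lambda))$ is graded by the three $I_n$-eigenvalues. As a graded quotient of $\overline{\Gamma}_\eta(M(\lambda))$, it is generated by its degree-$0$ part. That part is $\Gamma^0_\eta$ of the completed simple $\mathfrak{g}_0$-module $L(\lambda)_0$, hence simple or zero.
(ii) A nonzero homogeneous vector of positive degree in $\overline{L(\lambda)}$ is not killed by $\mathfrak{g}_{-1}$. This holds because $\mathfrak{g}_{-1}$ acts weight-componentwise, and a $\mathfrak{g}_{-1}$-invariant of positive degree in the simple module $L(\lambda)$ would generate a proper submodule.
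So every nonzero submodule meets degree $0$ and is therefore everything.

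In (3) you are right that equality can only hold for $\mu$ whose factor survives. But Theorem \ref{thm: 3.2}(2) alone does not make $\mu\mapsto\mathcal{L}(\sigma\mu,\eta)$ injective on the survivors. You also need that an integral $\mathcal{W}_\eta\bullet$-orbit contains at most one anti-dominant weight. Finally, finite length of $M(\lambda)$ follows from $\dim U(\mathfrak{g}_{\geq1})=4$ together with finite length in $\mathcal{O}^{\mathfrak{g}_0}$. It does not follow from Proposition \ref{lem: w2fintelength}(1), which concerns $\mathscr{W}$.
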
	
		
		\begin{proof}
			First assume that $\sigma \la$ is $\mbf n_\eta$ anti domiant. By PBW theorem we see that as a vector space $\ov{M(\la)} = U(\mf g_{\geq 1})\ot \ov{L^0(\sigma\la )} $, since for $W(2)$, the automorphism $\bar\sigma$ behaves like the anti automprphism of $\mf g_0$ except on $\mf h$.  Hence $\ov\Gamma_\eta(M(\la)) $ consists all vectors $v$ of $ U(\mf g_{\geq 1})\ot \ov{L^0(\sigma\la)}$ such that $x -\eta(x)$ acts nilpotently on $v$ for all $x \in \bfn$. Therefore there is an embedding of  $U(\mf g_{\geq 1})\ot \Gamma_\eta^0({L^0(\sigma\la)}) \hookrightarrow \ov\Gamma_\eta(M(\la)) $ induced by the embedding $\Gamma_\eta^0({L^0(\sigma\la)}) \hookrightarrow \ov{L^0(\sigma\la)}$. Now by \cite{Back}, we have $\Gamma_\eta^0({L^0(\sigma\la)})  = L^0(\sigma\la, \eta)$ and hence $U(\mf g_{\geq 1})\ot L^0(\sigma\la, \eta) \hookrightarrow \ov\Gamma_\eta(M(\la)).$ Now we prove that the induced embedding is onto. To prove this it is sufficient to show that as $\mf g_0$ module both of $U(\mf g_{\geq 1})\ot L^0(\sigma\la, \eta)$ and $\ov\Gamma_\eta(M(\la))$ have same number of composition factors. Since both of them are Whittaker $\mf g_0$ module they have finite length as $\mf g_0$ module. Now the isomorphism follows from the following observation. \\
			$\Res^{\mf g}_{\mf g_0} \ov{\Gamma}_\eta(M(\la)) =  \Res^{\mf g}_{\mf g_0} \ov{\Gamma}_\eta( U(\mf g_{\geq 1})\ot {L^0(\la))}=\Res^{\mf g}_{\mf g_0}( U(\mf g_{\geq 1})\ot  {\Gamma}_\eta^0{L^0(\sigma\la))}=\Res^{\mf g}_{\mf g_0}( U(\mf g_{\geq 1})\ot {L^0(\sigma\la,\eta))}=\Res^{\mf g}_{\mf g_0}(\mcal M(\sigma\la, \eta))$.\\
			Thus from the above we have as a $\mf g_0 $ module $ \ov\Gamma_\eta(M(\la)) \cong M(\sigma\la, \eta)$. Now consider the action of $I_{n \times n} \in \mf g_0$ on $\ov\Gamma_\eta(M(\la))$, which acts semi-simply on it due to the isomorphism. Further it is easy to see that this action induces a $\Z$ gradation with zeroth graded component as  $L^0(\sigma\la, \eta)$. In  particular $\mf g_{-1}$ acts trivially on $L^0(\sigma\la, \eta)$. Now by Frobenius reciprocity we have the desired isomorphism.

			Also it is clear from the above proof and  Proposition 6.9(3) of \cite{Back}  that when $\sigma \la $ is not $\mbf n_\eta$ anti dominant, then $\ov\Gamma_\eta(M(\la))=0$. This completes the proof of (1).\\
			To prove (2), note that $L(\la)$ is an irreducible module in Category $\mcal O$ for $W(2)$. Also note that $W(2) \cong \mf{osp}(2,2)$ (see \cite{CW12}, Chapter 1). Further it is easy to see that category $\mcal O $ for $W(2)$ is a subcategory of category $\mcal O$ for $\mf{osp}(2,2)$. Therefore one can apply $\Gamma_\eta$ on modules of category $\mcal O$ for $W(2)$.  Hence we get that $\ov \Gamma_\eta(L(\la))= \Gamma_\eta(L(\sigma\la))$ this equality follows from the fact that action of $\ov \Gamma_\eta$ and $ \Gamma_\eta$ only differs on $\mf h$. Further category of Whittaker modules for $W(2)$ and category of whittaker module for $\mf{osp}(2,2)$ coincides (see \cite{CWC}, for the definition of Whittaker module of $\mf{osp}(2,2)$). Therefore we have the result from Theorem 9 and Theorem 20 of \cite{CWC}. Now (3) follows with the help of Lemma \ref{exact fun}.

		\end{proof}

		\subsection{} We continue to  consider the Lie super algebra $\mf{osp}(2,2)$ for Whittaker representations of $W(2)$.
		$M(\la)$ can be considered as the Kac module for $\mf{osp}(2,2)$. Now we recall the root system for $\mf{osp}(2,2)$. Let $\Delta_{\bar 0}$ and $\Delta_{\bar 1}$ be the sets of even and odd roots of $\mf{osp}(2,2)$. Let $h_1=E_{11}-E_{22}$ and $h_1'=E_{33}-E_{44}$. Then $ \rm{span}\{h_1,h_1'\}$ forms a basis for the Cartan subalgebra of $\mf{osp}(2,2)$. Define the dual basis $\epsilon, \delta$ of $h^*$ as
		$$ \epsilon(h_1)=1, \, \,\, \epsilon(h_1')=0, \, \text{and}  $$
		$$ \delta(h_1)=0, \,\,\,\, \delta(h_1')=1.  $$
		Now define a blinear form on $h^*$ as
		$$(\epsilon, \epsilon)=1, \,\,\, (\delta,\delta)=-1,\,\, (\epsilon, \delta)=(\delta, \epsilon)=0.  $$
		Then we get that $$\Delta_{\bar 0}=\{\pm 2\delta\} \,\,\, \text{and} \,\,\, \Delta_{\bar 1}=\{ \pm \epsilon \pm \delta \}.$$
		Further, $\Delta_{\bar 0}^+={2\delta}$ and $\Delta_{\bar 1}^+=\{ \epsilon -\delta, \epsilon+\delta \}$ are the sets of even and odd positive roots of $\mf{osp}(2,2)$. Hence
		
		$$\rho= \frac{1}{2} \sum_{\al \in \Delta_{\bar 0}^+ }\al -\frac{1}{2}\sum_{\al \in \Delta_{\bar 1}^+ } \al= \delta -\epsilon.$$
		A weight $\la \in h^*$ is said to be atypical for $\mf{osp}(2,2)$ if there exists a root $\ga$ in $\Delta_{\bar 1}^+$ such that $(\rho+\la, \ga)=0$.\\
		Let $\la=a\epsilon_1 +b \epsilon_2$ be a dominant integral weight of $\mf{gl}(2)$. Since $\mf{osp}(2,2)_0 \cong \mf{gl}(2)$, we can treat $\la$ as a dominant integral weight for $\mf{osp}(2,2)$. In particular, one can see that $\la= a\epsilon +b \delta$. Now $\la$ will be atypical for $\mf{osp}(2,2)$ if
		$$(\rho+ \la,\epsilon -\de)=0, \,\, \text{or} \,\, (\rho+ \la,\epsilon +\de) =0, \, i.e, $$
		$$ b=-a,  \,\, \text{or} \,\, b=a-2, i.e, \,\,  \la= a\epsilon -a \delta, \, a \geq 0, \,\, \text{or} \,\, \la= a\epsilon +(a-2) \delta ,  \, \, a \in \Z. $$
		
		The following proposition follows from Theorem 3.3 of \cite{SZ}. 		
		
		\begin{prop}
			Let $\la $ be a dominant integral weight of $\mf{osp}(2,2)$.
			\begin{itemize}
				\item[(1)]  Kac module $K(\la)$ of $\mf{osp}(2,2)$ has composition series of length 1 or 2, according as $\la$ is typical or atypical.
				\item[(2)] There exists a unique positive atypical root $\ga$ of $\la$.
				\item[(3)] Let $k$ be the smallest positive integer such that  $\mu=\la -k \ga$ and $(\mu+\rho,\al)\neq 0$ for all $ \al \in \Delta_{\bar 0}^+$. Then there exists a unique element $\omega$ in Weyl group of $\mf{osp}(2,2)$ such that $\la^1=\omega(\mu+\rho)-\rho$ is dominant integral for $\mf{osp}(2,2)$.
				\item[(4)] $[K(\la),L(\la)]=1$ and $[K(\la),L(\la^1)]=1,$ when $\la $ is atypical.
				\item[(5)] $[K(\la),L(\la)]=1,$ when $\la$ is typical.
			\end{itemize}

		\end{prop}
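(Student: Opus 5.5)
We plan to derive everything from the structure of Kac modules over $\mf{osp}(2,2)\cong\mf{sl}(2|1)$. This is a type I superalgebra with $\mf{osp}(2,2)=\mf g_{-1}\op\mf g_0\op\mf g_1$, $\mf g_0\cong\mf{gl}(2)$ and $\dim\mf g_{\pm1}=2$. We would take the Kac module $K(\la)=U(\mf{osp}(2,2))\ot_{U(\mf g_0\op\mf g_1)}L^0(\la)$ and filter it by $\mf g_{-1}$-degree $0,1,2$. Then $K(\la)\cong\Lambda(\mf g_{-1})\ot L^0(\la)$ as $\mf g_0$-modules, which gives
$$\ch K(\la)=e^{\la}\prod_{\ga\in\Delta_{\bar1}^+}(1+e^{-\ga})\cdot \ch L^0(\la)/e^{\la}.$$
This character identity, together with Kac's criterion, is the input for all the remaining steps.

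\textbf{Step 1 (items (2) and (5)).} Write $\rho+\la=(a-1)\epsilon+(b+1)\delta$. Then $(\rho+\la,\epsilon-\delta)=a+b$ and $(\rho+\la,\epsilon+\delta)=a-b-2$. Both vanish only when $a=1$ and $b=-1$, and we will check that dominance (the condition coming from $2\delta$) excludes $b=-1$. This gives (2). For the typical case we invoke Kac's simplicity criterion: $K(\la)$ is simple iff $\prod_{\ga\in\Delta_{\bar1}^+}(\la+\rho,\ga)\neq0$. This gives (5) and the typical half of (1).

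\textbf{Step 2 (item (3)).} In the atypical case, with $\ga$ the unique atypical root, we compute $\la-k\ga+\rho$ for $k=1,2,\dots$. Since $(\ga,\ga)=0$, the atypicality $(\la-k\ga+\rho,\ga)=0$ is preserved along this line. We take the first $k$ at which the $\delta$-coordinate avoids the wall of $2\delta$, i.e.\ $(\mu+\rho,2\delta)\neq0$. The even Weyl group is $\{1,s_{2\delta}\}$, so the required $\omega$ is unique: it is either the identity or $s_{2\delta}$, according to the sign of $(\mu+\rho,\delta)$. The explicit value of $k$ depends on whether $\ga=\epsilon-\delta$ or $\ga=\epsilon+\delta$, and we will record it in each case.

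\textbf{Step 3 (items (4) and (1), atypical case).} We apply Theorem 3.3 of \cite{SZ}, the Su--Zhang character formula specialized to $\mf{gl}(2|1)$, which yields
$$\ch L(\la)=\ch K(\la)-\ch K(\la^1),$$
and also that $L(\la^1)$ is the socle of $K(\la)$. Since $K(\la^1)$ and $L(\la^1)$ have the same character in the relevant degrees, comparing characters gives $\ch K(\la)=\ch L(\la)+\ch L(\la^1)$. Hence each factor occurs with multiplicity one, and the length is $2$.

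The main obstacle is this last step: identifying $\la^1$ exactly as produced in (3) with the weight appearing in the \cite{SZ} formula, including the sign bookkeeping with $\rho=\delta-\epsilon$. As a check independent of \cite{SZ}, we would first verify directly that the vector $X_{-\ga}\,X_{-\ga'}\ot v_\la$ in top $\mf g_{-1}$-degree is $\mf g_1$-singular precisely in the atypical case. Here $\ga'$ is the other odd positive root. This produces the submodule with head $L(\la^1)$, and a dimension count using the Weyl dimension formula for $\mf{gl}(2)$ then closes the argument.
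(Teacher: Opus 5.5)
Your Steps 1 and 2 are correct, and more explicit than the paper, which gives no argument beyond citing Theorem 3.3 of \cite{SZ}. Concretely, one gets $k=1$, $\omega=1$ and $\la^1=\la-\ga$, except when $\ga=\epsilon+\delta$ and $b=0$; in that case $k=2$, $\omega=s_{2\delta}$ and $\la^1=\la-2\epsilon$.

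The gap is in Step 3. The identity $\ch L(\la)=\ch K(\la)-\ch K(\la^1)$ is false. Take $\la=0$, which is atypical for $\epsilon-\delta$. Then $\la^1=-\epsilon+\delta$, and comparing dimensions gives $1\neq 4-8$. Your patch does not repair this. The weight $\la^1$ is again dominant and atypical, so $K(\la^1)$ is not simple by the criterion you use in Step 1. Yet the two character identities you write down would together force $\ch K(\la^1)=\ch L(\la^1)$. The two characters also differ on the weights of $K(\la)$: for $\la=0$, the weight $-2\epsilon$ has multiplicity $2$ in $K(-\epsilon+\delta)$ but multiplicity $1$ in $L(-\epsilon+\delta)$. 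What holds in this singly atypical situation is an infinite alternating sum, $\ch L(\la)=\sum_{j\geq 0}(-1)^j\ch K(\la^{(j)})$, with $\la^{(0)}=\la$ and $\la^{(j+1)}=(\la^{(j)})^1$. Subtracting the same formula for $\la^1$ gives $\ch K(\la)=\ch L(\la)+\ch L(\la^1)$, and that is what yields (4) and the atypical half of (1).

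Your independent check also fails, because it targets the wrong vector. The vector $X_{-\ga}X_{-\ga'}\otimes v_\la$ has weight $\la-2\epsilon$. By your own Step 2, however, the socle has highest weight $\la^1=\la-\ga$, which lies in $\mf g_{-1}$-degree one except in the case $\ga=\epsilon+\delta$, $b=0$. For $\la=0$ the top vector cannot be $\mf g_1$-singular. If it were, it would span a one-dimensional submodule of weight $-2\epsilon\neq 0$. That is impossible, since $[\mf g_1,\mf g_{-1}]=\mf g_0$ forces every one-dimensional module to be trivial.

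An elementary argument that does work runs as follows.
\begin{enumerate}
\item Since $K(\la)$ is free over $\Lambda(\mf g_{-1})$, every nonzero submodule contains the $\mf g_0$-irreducible space $\Lambda^2\mf g_{-1}\otimes L^0(\la)$. Hence the socle is simple, and it is proper exactly when $\la$ is atypical.
\item The socle is generated by the $\mf g_0$-highest weight vector of weight $\la-\ga$ in $\mf g_{-1}\otimes L^0(\la)$; in the exceptional case it is generated by the top vector instead. You must check that this vector is $\mf g_1$-singular precisely when $(\la+\rho,\ga)=0$.
\item $K(\la)$ has at most four $\mf g_0$-types, each of multiplicity one. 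The head $L(\la)$ contains $L^0(\la)$ and, for $\la\neq 0$, a degree-one type. The socle, which lies in the radical, contains $L^0(\la^1)$ and $L^0(\la-2\epsilon)$. This leaves no room for a third composition factor.
\end{enumerate}
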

		Now the following Theorem follows from Proposition $4.5$ and above discussion.
		
		\begin{prop} (Number of composition factor of $\mcal M(\eta,\la)$ for $W(2)$) Let $ \la \in \mf h^*$ be dominant integral and $\la^1$ be as in Proposition $4.5$ Then
			\begin{itemize}
				\item[(1)] $[\mcal M(\eta,\sigma\lambda):\mcal L(\eta,\sigma\lambda)]=[\mcal M(\eta,\sigma\lambda):\mcal L(\eta, \sigma\la^1)]=1,$
				when $\la=a \epsilon - a\delta , \,  a \geq 0 $ or $\la = a\epsilon + (a-2) \delta, \, a \in \Z.$
				\item[(2)] $[\mcal M(\eta,\la):\mcal L(\eta,\la)]=1 ,$ otherwise
			\end{itemize}
			
		\end{prop}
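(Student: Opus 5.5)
The plan is to transport the composition multiplicities of the standard modules in the parabolic category $\calo$ to the Whittaker side, using the generalized Backelin functor $\ov\Gamma_\eta$. Everything is read off from the preceding proposition on $\ov\Gamma_\eta$ for $W(2)$ and from Proposition 4.5 on Kac modules of $\mf{osp}(2,2)$.

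\textbf{Step 1: identify the two module families.} Since $W(2)\cong\mf{osp}(2,2)$ and $\sfp=\ggg_{-1}\oplus\ggg_0$ plays the role of the parabolic used to build Kac modules, the standard module
$$M(\la)=U(\ggg)\otimes_{U(\sfp)}L^0(\la)$$
is the Kac module $K(\la)$ for dominant integral $\la$. Its simple head $L(\la)$ is the corresponding simple $\mf{osp}(2,2)$-module. Hence, by Proposition 4.5, $[M(\la):L(\la)]=1$ in all cases. If $\la$ is atypical, we additionally have $[M(\la):L(\la^1)]=1$ and there are no other factors. If $\la$ is typical, $M(\la)=L(\la)$.

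\textbf{Step 2: classify the weights.} The atypicality computation preceding Proposition 4.5 shows exactly when $\la=a\epsilon_1+b\epsilon_2$, read as $a\epsilon+b\delta$, is atypical. This happens iff $\la=a\epsilon-a\delta$ with $a\geq 0$, or $\la=a\epsilon+(a-2)\delta$ with $a\in\Z$. This is precisely the case split of the statement.

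\textbf{Step 3: apply $\ov\Gamma_\eta$.} By Lemma \ref{exact fun}, $\ov\Gamma_\eta$ is exact. Applying it to a composition series of $M(\la)$ in $\calo$ gives a filtration of $\ov\Gamma_\eta(M(\la))$. By part (1) of the preceding proposition, $\ov\Gamma_\eta(M(\la))\cong\mcal M(\eta,\sigma\la)$. By part (2), the subquotients are $\mcal L(\eta,\sigma\mu)$ or $0$. Part (3) of that proposition then gives
$$[\mcal M(\eta,\sigma\la):\mcal L(\eta,\sigma\mu)]=[M(\la):L(\mu)].$$
Substituting $\mu=\la$ and $\mu=\la^1$ from Step 1 yields (1). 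In the typical case it yields multiplicity one for the head; after renaming $\sigma\la$ as $\la$ this is (2), which is meant with $\sigma$ implicit.

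\textbf{Main obstacle.} The delicate point is the $\mbf n_\eta$ anti-dominance hypothesis needed for $\ov\Gamma_\eta$ not to kill the modules. For the equalities to be non-vacuous, I need $\sigma\la$ and $\sigma\la^1$ to satisfy the hypothesis of parts (1) and (2) of the preceding proposition. I would check this first: $\la^1$ is obtained from $\la$ by the odd reflection/Weyl group procedure of Proposition 4.5(3), so I would verify that $\ov\Gamma_\eta(L(\la^1))\ne0$ directly.

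If the hypothesis fails for $\la^1$, the corresponding factor vanishes. In that case the statement must be interpreted with the convention that $\mcal L(\eta,\sigma\la^1)$ is computed via $\ov\Gamma_\eta$, and I would fall back on the multiplicity identity of part (3) as the definition-level input. Apart from this check, the remaining steps are bookkeeping with exactness.
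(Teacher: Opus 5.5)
Your route is exactly the paper's. Its entire proof is the remark that the statement follows from Proposition 4.5 and the preceding proposition on $\ov\Gamma_\eta$, i.e.\ Kac-module multiplicities transported through $[M(\la):L(\mu)]=[\mcal M(\eta,\sigma\la):\mcal L(\eta,\sigma\mu)]$. The trouble is that this transport cannot work. The check you postpone in your ``main obstacle'' paragraph fails for \emph{every} $\la$ once $\eta\neq 0$.

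For $\ggg=W(2)$, $\ggg_1$ is odd and $[\ggg_1,\ggg_1]\subseteq\ggg_2=0$, so $U(\ggg)=\Lambda(\ggg_1)U(\sfp)$ with $\dim\Lambda(\ggg_1)=4$. Hence every object of $\calo$ (finitely generated and locally $\sfp$-finite) is finite-dimensional, in particular $M(\la)$, $L(\la)$ and $L(\la^1)$. For such $M$ we have $\ov M=M$, and $y_1D_2$ acts nilpotently because it shifts $\hhh$-weights. So $y_1D_2-\eta(y_1D_2)$ is invertible, and $\ov\Gamma_\eta(M)=0$ for all $M\in\calo$. On the other hand, $\mcal M(\eta,\sigma\la)$ is infinite-dimensional.

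Consequently the isomorphism $\ov\Gamma_\eta(M(\la))\cong\mcal M(\eta,\sigma\la)$ in your Step 3 is false, and exactness only yields $0=0$. Falling back on part (3) is circular, since the paper derives (3) solely from (1), (2) and exactness. Backelin's results concern the infinite-dimensional simple $\ggg_0$-module of anti-dominant highest weight, not the finite-dimensional $L^0(\la)$ from which $M(\la)$ is induced. The paper's proof rests on the same step, so quoting the preceding proposition more carefully will not repair it.

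Step 2 has an independent defect, also inherited from the paper. Under $\ggg_0\cong\mathfrak{so}(2)\oplus\mathfrak{sp}(2)$, the $\delta$-coordinate of $a\epsilon_1+b\epsilon_2$ is, up to a scalar, its value on $y_1D_1-y_2D_2$, i.e.\ proportional to $a-b$, not $b$. Concretely, put $x_j=y_1y_2D_j$. Then $D_i(x_j\otimes u)=1\otimes[D_i,x_j]u$, with $[D_1,x_j]=y_2D_j$ and $[D_2,x_j]=-y_1D_j$. These formulas show that $x_2\otimes v_{\epsilon_1}$ is killed by $\ggg_{-1}$, so $M(\epsilon_1)$ is reducible although $\epsilon_1$ is not on the stated list. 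They also show that $M(\epsilon_1-\epsilon_2)$, which is on the list, has no nonzero $\ggg_{-1}$-invariant vectors in degrees $1,2$ and is therefore simple. So the case split you feed into Proposition 4.5 is wrong as well.

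What a genuine proof needs is the following. The head multiplicities (the first equality in (1), and all of (2)) are elementary. $I_n$ acts semisimply on $\mcal M(\eta,\mu)=\bigoplus_{m=0}^{2}\Lambda^m(\ggg_1)\otimes L^0(\eta,\mu)$, its lowest eigenspace is the simple $\ggg_0$-module $L^0(\eta,\mu)$, and every composition factor isomorphic to $\mcal L(\eta,\mu)$ must meet that eigenspace. The second factor, however, requires an actual computation with the Whittaker modules. One option is a Backelin functor on the full BGG category of $\mf{osp}(2,2)\cong\mathfrak{sl}(2|1)$ as in \cite{CWC}, applied to modules induced from $\sfp$ of anti-dominant Verma modules of $\ggg_0$. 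Another is to locate $\ggg_{-1}$-invariant $\ggg_0$-submodules directly in the $I_n$-layers of degrees $1$ and $2$. Either way, the atypicality condition has to be re-derived rather than taken from the paper's list.
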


		\section{Finite $W$-superalgebras and generalized Skryabin equivalences}\label{sec: second part}
		
		\subsection{Finite $W$-superalgebras}
	For notational convenience, denote $\mf g_0=\bfg$. 	For any character $\eta:\nnn\rightarrow \bbf$, $\eta$ determines a subset $I$ of $\Delta$ which consists of $\alpha\in \Delta$ with  $\eta(\bfg_\alpha)\ne0$. Let $\bfg_\eta$ be the Levi subalgebra corresponding to $I$ with triangular decomposition $\bfg_\eta=\nnn_\eta^-\oplus \hhh \oplus \nnn_\eta$.

		Now we define
		$$ Q(\eta):= U(\ggg)\otimes_{U(\nnn_\eta)}\bbf_\eta,$$
		 where $\bbf_\eta$ is a one dimensional representation of $\nnn_\eta$.
		
		and consider the algebra
		$$U(\ggg,\eta):=\text{End}_\ggg(Q(\eta))^{\text{op}}.$$

		Let $I_{\eta}$ be the left ideal of $U(\ggg)$ generated by the set $\{ X-\eta(X):  X\in \bf n_{\eta} \}$. Then we have the following proposition.

		\begin{prop}\label{iso Q(eta,chi)}

			As left $U(\ggg)$-module, we have $Q(\eta)\simeq U(\ggg)\slash I_{\eta}$.

		\end{prop}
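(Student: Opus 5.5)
This is the standard identification of an induced module with a cyclic quotient. I will construct maps in both directions and check that they are mutually inverse.

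\emph{The map $\phi$.} Let $1_\eta$ be a basis vector of $\bbf_\eta$, so that $X\cdot 1_\eta=\eta(X)1_\eta$ for all $X\in\nnn_\eta$. Define $\phi:U(\ggg)\to Q(\eta)$ by $u\mapsto u\otimes 1_\eta$. This is a surjective homomorphism of left $U(\ggg)$-modules, because $Q(\eta)$ is generated by $1\otimes 1_\eta$. For $X\in\nnn_\eta$ we have
$$(X-\eta(X))\otimes 1_\eta=1\otimes (X-\eta(X))1_\eta=0.$$
Since $\phi$ is left $U(\ggg)$-linear, it kills the left ideal generated by these elements, so $I_\eta\subseteq\ker\phi$. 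Hence $\phi$ induces a surjection $\bar\phi:U(\ggg)/I_\eta\twoheadrightarrow Q(\eta)$.

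\emph{The inverse $\psi$.} I would use the universal property of the tensor product. Define $\psi':U(\ggg)\times\bbf_\eta\to U(\ggg)/I_\eta$ by $(u,c1_\eta)\mapsto c\,u+I_\eta$. The key point is that $\psi'$ is $U(\nnn_\eta)$-balanced, that is,
$$\psi'(uY,1_\eta)=\psi'(u,Y1_\eta)\quad\text{for all } Y\in U(\nnn_\eta).$$
Both sides are multiplicative in $Y$: the left side because right multiplication is associative, the right side because $\eta$ extends to an algebra character of $U(\nnn_\eta)$. It therefore suffices to check $Y=X\in\nnn_\eta$. In that case $uX-\eta(X)u=u(X-\eta(X))\in I_\eta$, since $I_\eta$ is a left ideal and $X-\eta(X)$ is one of its generators.

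So $\psi'$ descends to a well-defined linear map $\psi:Q(\eta)\to U(\ggg)/I_\eta$. It is left $U(\ggg)$-linear by construction. Both composites $\psi\circ\bar\phi$ and $\bar\phi\circ\psi$ fix the cyclic generators $1+I_\eta$ and $1\otimes 1_\eta$, so each composite is the identity. Hence $\bar\phi$ is an isomorphism.

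\emph{Expected obstacle.} The only delicate step is the balancedness, which requires passing from $\nnn_\eta$ to all of $U(\nnn_\eta)$ through the character. This needs $\eta$ to be a genuine Lie algebra character on $\nnn_\eta$, that is, $\eta([\nnn_\eta,\nnn_\eta])=0$. That holds because $\eta$ is a character of $\nnn^+$, hence vanishes on $[\nnn^+,\nnn^+]\supseteq[\nnn_\eta,\nnn_\eta]$. All elements of $\nnn_\eta$ are even, so no super-sign issues arise.

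Alternatively, by the PBW theorem $U(\ggg)$ is a free right $U(\nnn_\eta)$-module, and $I_\eta=U(\ggg)\ker(\eta|_{U(\nnn_\eta)})$. Then
$$Q(\eta)\cong U(\ggg)\otimes_{U(\nnn_\eta)}U(\nnn_\eta)/\ker\eta\cong U(\ggg)/U(\ggg)\ker\eta$$
by right exactness of the tensor product. This gives the same conclusion.
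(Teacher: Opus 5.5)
Your proof is correct, but it takes a different route from the paper.

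The paper computes $\ker\phi$ directly. It uses a PBW splitting $U(\ggg)=U(\ggg)\nnn_\eta\oplus(\text{span of PBW monomials in a complement of }\nnn_\eta)$, which amounts to freeness of $U(\ggg)$ as a right $U(\nnn_\eta)$-module. It then rewrites $u\otimes 1_\eta=\sum_k u^{(k)}\otimes X^k1_\eta$ with the $u^{(k)}$ right $U(\nnn_\eta)$-independent. From $u\otimes 1_\eta=0$ it deduces that each $X^k$ lies in the kernel of $\eta$ on $U(\nnn_\eta)$, so $u=\sum_k u^{(k)}X^k\in I_\eta$.

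You instead build the inverse $\psi$ from the universal property of $\otimes_{U(\nnn_\eta)}$. The only input is that $\eta$ is a genuine character of $\nnn_\eta$, which you justify correctly via $[\nnn_\eta,\nnn_\eta]\subseteq[\nnn^+,\nnn^+]$. This buys generality and robustness:
\begin{itemize}
\item no PBW theorem or freeness is used, so the argument works verbatim for any algebra, subalgebra and one-dimensional module;
\item you make explicit the inclusion $I_\eta\subseteq\ker\phi$, which the paper leaves tacit;
\item you never need to name a PBW complement. The paper's written complement $\ggg_{>0}+\ggg_{-1}+\nnn_\eta^-+\hhh$ in fact omits the root spaces $\ggg_{0,\alpha}$ with $\alpha\in\Phi\setminus\bbz I$ whenever $\eta$ is not regular. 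That slip is harmless, and your approach avoids it entirely.
\end{itemize}
The paper's route additionally yields a normal form for elements of $Q(\eta)$, but that is not part of the statement.

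One small point on your alternative argument: the appeal to PBW there is superfluous. Right exactness of $U(\ggg)\otimes_{U(\nnn_\eta)}(\cdot)$ applied to $0\to\operatorname{Ker}\eta\to U(\nnn_\eta)\to\bbf_\eta\to 0$ already gives $Q(\eta)\cong U(\ggg)/U(\ggg)\operatorname{Ker}\eta=U(\ggg)/I_\eta$.
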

		
		\begin{proof}
			In order to get the isomorphism, we consider the following $U(\ggg)$-module homomorphism
			$$\phi: U(\ggg)\longrightarrow U(\ggg) \otimes_{U(\bf n_{\eta})}\bbf_{\eta}$$
			$$u\longmapsto u\otimes \mathbf{1}_{\eta}$$
			It is easy to see that $\phi$ is surjective and now we will determine $\ker\phi$.
			
			Since $U(\ggg)=U(\ggg)\bf n_{\eta}\bigoplus U(\ggg_{>0}+\ggg_{-1}+ \nnn_{\eta}^{-}+ \hhh)$, we can assume that $u=u_{1}+u_{2}$, where $u_{1}\in U(\ggg)\bf n_{\eta}$ and $u_{2}\in U(\ggg_{>0}+\ggg_{-1}+ \nnn_{\eta}^{-}+ \hhh)$. Naturally we can get $u_{1}=\sum\limits_{i}u_{1}^{(i)}{ n}_{\eta}^{(i)},u_{2}=\sum\limits_{j}u_{2}^{(j)}$ for all $u_{1}^{(i)},u_{2}^{(j)}\in U(\ggg_{>0}+\ggg_{-1}+ \nnn_{\eta}^{-}+ \hhh)$ and $n_{\eta}^{(i)}\in U(\bf n_{\eta})$. So it follows that
			$$u\otimes \mathbf{1}_{\eta}=(u_{1}+u_{2})\otimes \mathbf{1}_{\eta}=\sum\limits_{i}u_{1}^{(i)}\otimes n_{\eta}^{(i)}\mathbf{1}_{\eta}+\sum\limits_{j}u_{2}^{(j)}\otimes \mathbf{1}_{\eta}.$$
			This implies that $u\otimes \mathbf{1}_{\eta}$ can be expressed as the form $\sum\limits_{k}u^{(k)}\otimes X^{k}\mathbf{1}_{\eta}$, where $\{u^{(k)}\}$ are right $U(\bf n_{\eta})$-linearly independent and $X^{k}\in U(\bf n_{\eta})$. Hence $u\otimes \mathbf{1}_{\eta}=\sum\limits_{k}u^{(k)}\otimes X^{k}\mathbf{1}_{\eta}=0$ implies $X^{k}1_\eta=0$. Now it follows from the definition of $\eta$ that $ X^k \in I_\eta$.\\
			$u\otimes \mathbf{1}_{\eta}=\sum\limits_{k}u^{(k)}\otimes X^{k}\mathbf{1}_{\eta}=\sum\limits_{k}u^{(k)}X^{k}\otimes \mathbf{1}_{\eta}$ implies $u=\sum\limits_{k}u^{(k)}X^{k}\in I_{\eta}$. Hence  we have $\ker\phi=I_{\eta},$ this completes the proof.\\

			
		\end{proof}

		\subsection{Generalized Skryabin's equivalences }		
		In this part we consider a full subcategory $\mscr W_{\eta}'$ of $U(\ggg)\text{-mod}$ whose objects have locally nilpotent action of $\{X-\eta(X):X \in {\bf n}_\eta\}$. One can see that $\mscr W_{\eta}$ is the subcategory of $\mscr W_{\eta}'$. This $\mscr W_\eta'$ is the category of  so-called weakened Whittaker modules mentioned in the beginning of the paper.

		For  any $M \in \mathscr{W}^{'}_{\eta}$ we define,
		$$Wh_{\eta}(M)=\{ m \in M: X.m=\eta(X)m, \, \forall \, X \in {\bf n}_\eta\}.$$
By Proposition 5.1, one can define a module action of $U(\mf g,\eta)$ on $Wh_\eta(M)$. Now we have the following generalized Skryabin's equivalence, by following \cite{P}.
		\begin{thm}\label{thm: skryabin eq} There is a generalized Skryabin's equivalence between the category $U(\ggg,\eta)\text{-mod}$ and the category $\mathscr{W}^{'}_{\eta}$.
			
		\end{thm}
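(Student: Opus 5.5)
The plan is to follow Skryabin's original argument as presented in \cite{P}. The functors will be
$$\mathcal{S}: U(\ggg,\eta)\hmod\to \mscr W'_\eta,\quad V\mapsto Q(\eta)\otimes_{U(\ggg,\eta)}V,$$
and $Wh_\eta:\mscr W'_\eta\to U(\ggg,\eta)\hmod$. The identification $Q(\eta)\simeq U(\ggg)/I_\eta$ of Proposition \ref{iso Q(eta,chi)} gives
$$U(\ggg,\eta)\cong (U(\ggg)/I_\eta)^{\ad\,\nnn_\eta}=Wh_\eta(Q(\eta)),$$
which makes $Q(\eta)$ a $(U(\ggg),U(\ggg,\eta))$-bimodule and defines the $U(\ggg,\eta)$-action on $Wh_\eta(M)$ by $\bar u\cdot m:=um$. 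There is also the natural isomorphism $Wh_\eta(M)\cong \Hom_\ggg(Q(\eta),M)$. From these, $Wh_\eta$ is right adjoint to $\mathcal S$, with unit $V\to Wh_\eta(\mathcal S V)$ and counit $\mathcal S(Wh_\eta M)\to M$, $u\otimes m\mapsto um$. It then suffices to show that the unit and counit are isomorphisms and that $\mathcal S V$ really lies in $\mscr W'_\eta$.

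\textbf{Step 1: a filtration and freeness.} Choose an $\ad$-stable complement by splitting
$$\ggg=\mf m\oplus\nnn_\eta,\qquad \mf m=\ggg_{-1}\oplus\ggg_{\geq1}\oplus\nnn^-_\eta\oplus\hhh\oplus\uuu^{\pm}.$$
Since $\hhh$ is in $\mf m$, pick $h\in\hhh$ (a suitable cocharacter, e.g.\ $\rho^\vee_\eta$ shifted by a multiple of $I_n$) such that $\ad h$ has strictly positive eigenvalues on $\nnn_\eta$. This grading gives a Kazhdan-type filtration on $Q(\eta)$ in which $\ad(X-\eta(X))$, $X\in\nnn_\eta$, acts locally nilpotently. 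Taking the associated graded, $\mathrm{gr}\,Q(\eta)\cong S(\mf m)$ (super-symmetric) as an $\nnn_\eta$-module. The key claim, the analogue of \cite[Thm.~6.1]{P}, is then:
$$Q(\eta)\cong U(\ggg,\eta)\otimes_\bbf \bbf[\nnn_\eta^*]\quad\text{(as a right $U(\ggg,\eta)$-module, free).}$$
Equivalently, $H^i(\nnn_\eta,Q(\eta)\otimes\bbf_{-\eta})=0$ for $i>0$. I would prove this by passing to the associated graded. There the odd part $\ggg_{\bar1}$ lies entirely in $\mf m$, and $\nnn_\eta$ is purely even, so the odd generators contribute a finite-dimensional exterior factor $\bigwedge(\ggg_{\bar1})$. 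This factor carries a locally finite $\nnn_\eta$-action and is harmless. The even part reduces to the Lie algebra situation $\ggg_{\bar0}\supset\nnn_\eta$, where a Slodowy-slice–type transversal $\mf m=\ker(\ad\,\text{on }\nnn_\eta^*)$ complement argument gives freeness. This step is the main obstacle. $\ggg_{\bar 0}$ is not reductive, so the Slodowy slice argument must be replaced by a direct check that the coadjoint action of the unipotent group $N_\eta$ on $\eta+\mf m^\perp$ is free with a closed section. I would attempt this via the $h$-contracting $\bbf^\times$-action, as in Gan–Ginzburg.

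\textbf{Step 2: the equivalence.} Granting Step 1, the argument is formal. Any $M\in\mscr W'_\eta$ has a nonzero Whittaker vector by Engel's theorem, applied to the locally nilpotent action of the nilpotent Lie algebra $\{X-\eta(X)\}$. Consider the counit, with kernel $K$ and cokernel $C$. One shows $Wh_\eta(C)=0$, using exactness of $Wh_\eta$ on $\mscr W'_\eta$; this exactness comes from the vanishing of higher $\nnn_\eta$-cohomology, which follows from the freeness in Step 1 together with a Koszul argument. Hence $C=0$. Similarly, freeness gives $Wh_\eta(\mathcal S V)=V$, so the unit is an isomorphism and $Wh_\eta(K)=0$, hence $K=0$. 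Finally, $\mathcal SV\in\mscr W'_\eta$ because $Q(\eta)$ has locally nilpotent $X-\eta(X)$ action, by the filtration argument of Step 1. This yields the asserted equivalence.
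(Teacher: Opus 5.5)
\textbf{The gap is Step 1. It carries the whole content of the theorem, and you leave it open.} Step 2 is formal once you know that every $\mathcal S V$ is \emph{cofree} as an $\eta$-twisted $\nnn_\eta$-module, i.e.\ $\mathcal SV\cong\bbf[N_\eta]\otimes V$. That statement gives $Wh_\eta(\mathcal SV)=V$ and $H^1(\nnn_\eta,\mathcal SV\otimes\bbf_{-\eta})=0$, which is all your kernel/cokernel argument needs. Right freeness of $Q(\eta)$ over $U(\ggg,\eta)$ and vanishing of its higher $\nnn_\eta$-cohomology are not equivalent, and neither one alone yields $Wh_\eta(\mathcal SV)=V$.

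More seriously, the route you propose for Step 1 fails in general. A Kazhdan filtration that sees $\eta$ requires $\eta$ to be $\ad h$-homogeneous, say of degree $d>0$. The Gan--Ginzburg contracting argument on $\eta+\nnn_\eta^\perp$ then needs every complement element of $\ad h$-degree $i$ to have weight $d-i>0$. (The relevant space is $\nnn_\eta^\perp$, not $\mf m^\perp$ for your complement $\mf m$.) Take $n=4$, $I=\{\epsilon_1-\epsilon_2,\epsilon_3-\epsilon_4\}$, so $\nnn_\eta=\bbf y_1D_2\oplus\bbf y_3D_4$, and $h=\sum_k h_ky_kD_k$. Homogeneity forces $h_1-h_2=h_3-h_4=d$. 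The complement contains $y_1D_4\in\uuu^+$ and $y_3D_2\in\uuu^-$, of degrees $h_1-h_4$ and $h_3-h_2$, and these sum to $2d$. So one of the two weights is $\le 0$. Shifting $h$ by a multiple of $I_n$ cannot help, because $I_n$ is central in $\ggg_0$. Hence the $\bbf^\times$-action is not contracting and the filtration of $Q(\eta)$ is not positive. Neither the slice lemma nor the comparison with the associated graded is then available.

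\textbf{Missing idea: reduce to the Levi.} Everything that matters lives in the reductive Levi $\bfg_\eta$: $\nnn_\eta$, $\eta$, and the elements $\eta$-dual to $\nnn_\eta$. There $\eta=(e,\cdot)$ with $e\in\nnn_\eta^-$ principal nilpotent in $\bfg_\eta$. Consequently $Y\mapsto\eta([Y,\cdot])$ is injective on $\nnn_\eta$, and the principal grading \emph{is} positive on $\bfg_\eta/\nnn_\eta$. The odd part, the $\ggg_k$ with $k\neq0$, and $\uuu^\pm$ are spectators.

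So restrict $M\in\mscr W'_\eta$ to $\bfg_\eta$ and apply Kostant--Skryabin there. This gives $M\cong Q_{\bfg_\eta}(\eta)\otimes_{U(\bfg_\eta,\eta)}Wh_\eta(M)\cong\bbf[N_\eta]\otimes Wh_\eta(M)$ as twisted $\nnn_\eta$-modules, so $Wh_\eta$ is exact on all of $\mscr W'_\eta$. Combine this with $Wh_\eta(Q(\eta))=U(\ggg,\eta)$ and right exactness of $\mathcal S$: the unit is an isomorphism on free modules, hence on all $V$ by a free presentation. Your Step 2 then goes through.

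Equivalently, you can work with $Q(\eta)$ directly. The subspace $\mathfrak c=\uuu^+\oplus\uuu^-\oplus\bigoplus_{k\ne0}\ggg_k$ is an $\ad\bfg_\eta$-stable complement, and super-symmetrization gives $Q(\eta)\cong S(\mathfrak c)\otimes Q_{\bfg_\eta}(\eta)$. Here $\nnn_\eta$ acts diagonally, and ad-nilpotently on $S(\mathfrak c)$, so the tensor product is again cofree.

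Two further remarks. That $\mathcal SV\in\mscr W'_\eta$ needs no filtration: $(X-\eta(X))u1_\eta=[X,u]1_\eta$, and $\ad X$ is locally nilpotent on $U(\ggg)$. The paper avoids geometry altogether and runs Skryabin's elementary argument: elements $X_j$ with $\eta([Y_i,X_j])=\delta_{ij}$, the spaces $E_{V,\eta}$, the map $\phi$, and the basis $X^{\mathbf a}1_\eta$ of $Q(\eta)$ over $U(\ggg,\eta)$. Those computations also take place inside $U(\bfg_\eta)$. They require dual elements for a full basis of $\nnn_\eta$, not just the simple root vectors, and the nondegeneracy above supplies them.
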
	
		\begin{proof}
			Note that for $\eta=0$, $Q(\eta)=U(\ggg)$ and $U(g,\eta)=End_{\ggg}(U(\ggg))^{op}\simeq U(\ggg)$. Then it is easy to see that both $\mscr W^{'}_{0}$ and $U(\ggg,\eta)\text{-mod}$ are the category of $U(\ggg)$ modules. Now we only need to consider the case of $\eta\ne0$.
			
			In order to get the equivalence between two categories, we define the following functors:
			
			$$F: \mathscr{W}^{'}_{\eta}\longrightarrow U(\ggg,\eta) \text{-mod}$$
			$$M\longmapsto Wh_{\eta}(M), \,\,\, $$
			
		$$G: U(\ggg,\eta)\text{-mod} \longrightarrow \mathscr{W}^{'}_{\eta}$$
			$$V\longmapsto Q(\eta)\otimes_{U(\ggg,\eta)}V$$
			
			It is easy to check the well-definedness of $F$ and $G$ by Proposition 5.1.
			
			{\bf Case I:} Let $\eta $ be regular, i.e $\eta(x_k\pr_{k+1}) \neq 0 $ for $1 \leq k \leq n-1$.  Then $Y_{k}=x_{k}\pr_{k+1}$, $k=1,\dots,n-1$ forms a basis for $n_{\eta}$. Set $X_{k}=\frac{\sum\limits_{i=k+1}^{n}x_{i}\pr_{i}}{\eta(Y_{k})}$, $k=1,\dots,n-1$. Then it follows that $\eta([Y_{i},X_{j}])=\delta_{ij}$.
			
			For ${\bf a}=(a_1, \cdots, a_{n-1}) \in \N^{n-1},$ set	$$X^{\mathbf{a}}=X_{1}^{a_{1}}\cdots X_{n-1}^{a_{n-1}}\in U(\ggg), \,\,\, \text{and}$$ $$u_{\mathbf{a}}=(Y_{1}-\eta(Y_{1}))^{a_{1}}\cdots(Y_{n-1}-\eta(Y_{n-1}))^{a_{n-1}}\in U(\bf n_{\eta}).$$ Also let $|\mathbf{a}|=\sum\limits_{i=1}^{n-1}a_{i}$. We define an ordering on $\mathbb{N}^{n-1}$:
			for $\mathbf{a},\mathbf{b}\in \mathbb{N}^{n-1}$, define
			$$\mathbf{a}<\mathbf{b}$$
			if and only if
			$$|\mathbf{a}|<|\mathbf{b}|$$ or
			$$ \text{if} \,\,\,|\mathbf{a}|=|\mathbf{b}|$$ with
			$$a_{j}=b_{j},(j=1,2, \cdots,k-1 ) , \,\,\text{then} \,\, a_{k}<b_{k}. $$
			Let $J_{\eta}$ be the two sided ideal of $U(\bf n)$ generated by $X-\eta(X)$, for all $X \in \bf n_\eta$. Note that elements $u_{\bf a}$, for all ${\bf a} \in \N^{n-1}$ froms a basis for $U(\bf n_\eta)$, further $u_{\bf a} \in J_\eta^{|\bf a|}$.
			
			Denote by $I_{\mathbf{a}}$ the linear span of $u_{\mathbf{b}}$ with $\mathbf{b}>\mathbf{a}$. It is easy to see that $I_{\mathbf{a}}$ is an ideal of $U(\bf n_{\eta})$ and $J_\eta^r \subseteq I_{\bf a}$ for some $r >0$. \\
			Let $E_{V,{\eta}}=\{f\in \Hom_{\mathbb F}(U({\bf n}_\eta),V)|f(J_{\eta}^{m})=0, \, \text{ for some }  m>0\}$, then $E_{V,\eta}$ consists of all linear maps $f: U(\bf n_{\eta})\longrightarrow V$ such that $f(I_{\mathbf{a}})=0$, for some $\mathbf{a}$.\\
			
			For any $M\in \mathscr{W}^{'}_{\eta}$, we take $V=Wh_{\eta}(M)$. Now for any given $\mathbf{a},\mathbf{b}\in \mathbb{N}^{n-1}$ and $v\in V$, one can observe that $u_{\mathbf{b}}X^{\mathbf{a}}v=0$, whenever $\mathbf{b}>\mathbf{a}$ and $u_{\mathbf{a}}X^{\mathbf{a}}v=cv$ for some nonzero $c$. Take any linear map $\pi: M\longrightarrow V$ with $\pi|_{V}=id_{V}$ and define a $U(\bf n_{\eta})$-module map $\phi: M\longrightarrow E_{V,\eta}$ by sending $m\in M$ to $\phi(m): u\longmapsto \pi(um)$, for $u\in U(\bf n_{\eta})$. Note that $\phi$ is well defined due to that fact that $M \in \mathscr{W}^{'}_{\eta}$. Also it is eassy to see that $\phi$ is a $\bf n_\eta$-module map.
			
			Moreover, for any $f \in E_{V,\eta}$, we know that $f(I_{\bf a})=0$. Hence $f$ is determined by its action on $u_{\bf b}$ with $\bf b \leq \bf a$. Assume that $f(u_{\bf b})=v_{\bf b}$ for all $\bf b \leq \bf a.$ Then it is easy to see that $\phi(\sum_{{\bf b \leq \bf a}}X_{\bf b}v_b)$ is the the preimage of $f$ under $\phi$, i.e, $\phi$ is surjective. Again, $\ker\phi\cap V=0$ implies that $\phi$ is injective, since $\ker \phi$ is a $\bf n_\eta$-submodule of $M$. Then it follows that $\phi$ is an isomorphism and any element $m\in M$ can be uniquely expressed as the finite sum $\sum X^{\mathbf{a}}v_{\mathbf{a}}$.
			
			Let $M^{\mathbf{a}}=\{m\in M| I_{\mathbf{a}}.m=0\}$. Now for any $\mathbf{a}$, we denote $\mathbf{a}^{'}$ by the predecessor of $\mathbf{a}$. Then $u_{\mathbf{a}}$ determines a linear map $i_{\mathbf{a}}: M^{\mathbf{a}}\longrightarrow M^{0}=Wh_{\eta}(M)$ by sending $m\in M^{\mathbf{a}}$ to $u_{\mathbf{a}}m$. It is clear that $i_{\mathbf{a}}$ is surjective and $Keri_{\mathbf{a}}=M^{\mathbf{a}^{'}}$, when $\bf a \neq 0$. Note that each $M^{\bf a}$ is stable under $\rm{End}_{U(\ggg)}$-module, hence we get the folowing isomorphism of $\rm{End}_{U(\ggg)}$-module $M^{\mathbf{a}}/M^{\mathbf{a}^{'}}\simeq M^{0}$.
			
			Now we apply the above observation to the $\mf g$-module  $M$, where $M=Q(\eta) \cong U(\mf g)/I_\eta.$ Since we have $Q(\eta)$ is a free $U(\mf g, \eta)$-module of rank 1, so is $Q(\eta)^{\bf a}/ Q(\eta)^{\bf a'}$ for all $\bf a \neq 0$. In particular, the image of $X^{\bf a}$ in $Q(\eta)^{\bf a}/ Q(\eta)^{\bf a'}$ is a free generator of $Q(\eta)^{\bf a}/ Q(\eta)^{\bf a'}$. Now since $Q(\eta)= \cup_{{\bf a} \in \N^{n-1}} Q(\eta)^{\bf a}$, it follows that $X^{\mathbf{a}}1_{\eta}$ forms a basis of $Q(\eta)$ over $U(\ggg,\eta)$.
			
			Define a $U(\ggg)$-module map
			$$\mu:Q(\eta)\otimes_{U(\ggg,\eta)}V\longrightarrow M$$
			$$u1_{\eta}\otimes v\longmapsto uv.$$
			Since every element of $Q(\eta)\otimes_{U(\ggg,\eta)}V$ can be uniquely expressed as $\sum X^{\mathbf{a}}1_{\eta}\otimes v_{\mathbf{a}}$, it is easy to check that $\mu$ is a $U(\ggg)$-module map. Further, it clear that $\mu$ is surjective and injectivity follows by using the map $\phi$.
			
			Now we need to show that for any $M=Q(\eta)\otimes_{U(\ggg,\eta)}V^{'}\in\mathscr{W}^{'}_{\eta}$, $V^{'}\simeq Wh_{\eta}(M)$ as $U(\ggg,\eta)$-module. Define
			$$\nu: V^{'}\longrightarrow Wh_{\eta}(M)$$
			$$v\longmapsto 1_{\eta}\otimes v$$
			Since any element $m\in M$ can be expressed uniquely as $\sum X^{\mathbf{a}}\nu(v_{\mathbf{a}})$ and $m\in Wh_{\eta}(M)$ only if $\nu(v_{\mathbf{a}})=0$ for all $\mathbf{a}\ne 0$, $\nu$ is a $U(\ggg,\eta)$-module isomorphism. This completes the proof of Case I. \\
			{\bf Case II:} Let $\eta$ be any non zero character. Then $\eta \neq 0$ for some simple root vectors between $1 \leq k \leq n-1$. Let $\eta \neq 0$ on the root vectors corresponding to the $k$ values $i_1, \cdots, i_r$. Then we consider corresponding $Y_{i_j}$ and $X_{i_j}$ for $1 \leq j \leq r$ as Case I, which will satisfy the property $\eta([Y_{i_j},X_{i_k}])=\de_{jk}$ and then continue the proof of Case I. This completes the proof.
		\end{proof}
		
		Note that $\mscr W_{\eta}$ is the subcategory of $\mscr W_{\eta}^{'}$ and any irreducible object $\mcal L(\eta,\la)$ in $\mscr W_{\eta}$ is also an irreducible object in $\mscr W_{\eta}'$. Now by the above equivalence, we get a collection of irreducible Whittaker modules in $U(\ggg,\eta)\text{-mod}$.
		
		\begin{cor}
			For any irreducible object $\mcal L(\eta,\la)$ in the category $\mscr W_{\eta}$, the Whittaker space $Wh_{\eta}(\mcal L(\eta,\la))$ is also an irreducible object in the category $U(\ggg,\eta)\text{-mod}$.
		\end{cor}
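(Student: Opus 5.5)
The plan is to deduce the corollary formally from the generalized Skryabin equivalence of Theorem \ref{thm: skryabin eq}, using that an equivalence of abelian categories preserves simple objects.

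First, I will check that $\mcal L(\eta,\la)$ is an object of $\mscr W_\eta'$ and is simple there. By Theorem \ref{thm: cat decomp} and Theorem \ref{thm: 3.2}, $\mcal L(\eta,\la)$ lies in $\mscr W_\eta$. Its objects are locally annihilated by powers of $\ker\eta\subset \sfn$, and this kernel contains $\{X-\eta(X): X\in\nnn_\eta\}$. Hence each $X-\eta(X)$ with $X\in\nnn_\eta$ acts locally nilpotently, so $\mcal L(\eta,\la)\in\mscr W_\eta'$. Since $\mscr W_\eta'$ is a full subcategory of $U(\ggg)\hmod$ and $\mcal L(\eta,\la)$ is an irreducible $U(\ggg)$-module, it has no nonzero proper subobjects in $\mscr W_\eta'$. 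I also need $\mscr W_\eta'$ to be closed under submodules and quotients, which is clear from the definition, so simplicity in $\mscr W_\eta'$ means simplicity as a $U(\ggg)$-module.

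Next, I will apply the functor $F=Wh_\eta$ from the proof of Theorem \ref{thm: skryabin eq}. It is an equivalence with quasi-inverse $G=Q(\eta)\otimes_{U(\ggg,\eta)}-$. The equivalence gives a lattice isomorphism between the subobjects of $M$ and the subobjects of $F(M)$: a $U(\ggg,\eta)$-submodule $V'\subseteq Wh_\eta(M)$ corresponds to the $U(\ggg)$-submodule $U(\ggg)V'\cong G(V')$. So if $Wh_\eta(\mcal L(\eta,\la))$ had a nonzero proper submodule $V'$, then $U(\ggg)V'$ would be a nonzero submodule of $\mcal L(\eta,\la)$. I must show it is also proper. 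Here I use the identification $Wh_\eta(U(\ggg)V')=V'$, which comes from the map $\nu$ in the proof. This forces $U(\ggg)V'\neq\mcal L(\eta,\la)$, contradicting irreducibility.

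Finally, I need $Wh_\eta(\mcal L(\eta,\la))\neq0$, so that the result is a genuine simple object rather than zero. This follows either from faithfulness of the equivalence, or directly: $\nnn_\eta$ acts locally finitely, and the Whittaker vector of $L^0(\eta,\la)$ sitting in degree $0$ of $\mcal M(\eta,\la)$ maps to a nonzero vector of the quotient $\mcal L(\eta,\la)$.

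The main obstacle is that the equivalence in Theorem \ref{thm: skryabin eq} is stated somewhat loosely. I will rely specifically on two facts established in its proof. The first is that $\mu:Q(\eta)\otimes_{U(\ggg,\eta)}Wh_\eta(M)\to M$ is an isomorphism. The second is that $\nu:V'\to Wh_\eta(Q(\eta)\otimes V')$ is an isomorphism. Applying both to submodules gives exactly the lattice correspondence used above.
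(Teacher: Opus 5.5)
Your proposal is correct and follows the paper's own argument: $\mcal L(\eta,\la)$ is a simple object of $\mscr W_\eta'$, and simplicity is transported through the generalized Skryabin equivalence of Theorem~\ref{thm: skryabin eq}. Your explicit subobject correspondence via $\mu$ and $\nu$, and your check that $Wh_\eta(\mcal L(\eta,\la))\neq 0$, just spell out what the paper leaves implicit in the phrase ``by the above equivalence''.
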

		
 Very recently,  Futorny and Tantubay studied Whittaker modules for infinite-dimensional Lie super algebras $W(m;n)$.  Their consideration is quite different from ours.


\begin{thebibliography}{99}
			\bibitem{Back} E. Backelin, {\em Representation of the category O in Whittaker categories}, Int. Math. Res. Notices 4  (1997),
			153–172.

	\bibitem{CWC} C.W. Chen, {\em Whittaker Modules for Classical Lie Superalgebras}, Commun. Math. Phys.  388 (2021), 351–383.
			\bibitem{CW12}  S.-J. Cheng and W. Wang, {\it Dualities and representations of Lie \textit{superalgebras}} Grad. Stud. Math., 144, Amer.
			Math. Soc., Providence, RI, 2012.

			\bibitem{DSY} F.-F. Duan, B. Shu and Y.-F. Yao, {\em Parabolic BGG categories and their block decomposition for
				Lie superalgebras of Cartan type}, J. Math. Soc. Japan 76 (2024), no. 2, 503-562.



		\bibitem{FS} V. Futorny and S. Tantubay, Whittaker Modules For W Type Cartan Lie Superalgebras,  arXiv:2511.17995.

			\bibitem{Kac77} V. Kac, {\itshape Lie superalgebras}, Adv. Math., 26 (1977), 8-96.
			\bibitem{K}	 B. Kostant; {\em On Whittaker vectors and representation theory}, Invent. Math. 48 (1978); 101–184.
			\bibitem{Mc} E. McDowell, On Modules Induced from
			Whittaker Modules, J. Algebra, 96 (1985), 161-177.

			
			\bibitem{MS} D. Milicic and W. Soergel, {\em  The composition series of modules induced from
				Whittaker modules}, Comment. Math. Helv. 72 (1997) 503-520.
			
			\bibitem{P} A. Premet, {\em Special transverse slices and their enveloping algebras}, Adv. Math. 170 (2002),
			1–55.
			
		
			
			\bibitem[10]{SZ} Y. Su and R.B. Zhang, Generalised Jantzen filtration of Lie super algebras,  J.Eur.Math.Soc. 14 (2012),1103–1133.
			
	
			\bibitem{ZS} Y. Zeng and B. Shu, {\em Finite W-superalgebras for basic Lie superalgebras}, J. Algebra 438
			(2015), 188–234.
		\end{thebibliography}
	\end{document}